\documentclass{article}
\usepackage[hyphens,spaces,obeyspaces]{url}
\usepackage{color}
\usepackage[square,sort,comma,numbers]{natbib}
\usepackage{amsmath,amsthm}
\usepackage{amsfonts,amssymb}
\usepackage{mathrsfs}
\usepackage{hyperref}
\usepackage{fullpage}
\usepackage[all,2cell,cmtip]{xy}
\usepackage{breakurl}
\UseTwocells
\usepackage{proof}

\urlstyle{same}

\usepackage{tikz}
\usetikzlibrary{arrows,decorations,automata,petri,backgrounds,circuits,circuits.ee.IEC,shapes,fit,matrix}


\tikzstyle{simple}=[-,line width=2.000]
\tikzstyle{arrow}=[-,postaction={decorate},decoration={markings,mark=at position .5 with {\arrow{>}}},line width=1.100]
\pgfdeclarelayer{edgelayer}
\pgfdeclarelayer{nodelayer}
\pgfsetlayers{edgelayer,nodelayer,main}

\tikzstyle{none}=[inner sep=0pt]

\definecolor{lblue}{rgb}{0,250,255}
\tikzstyle{species}=[circle,fill=yellow,draw=black,scale=1.15]
\tikzstyle{transition}=[rectangle,fill=lblue,draw=black,scale=1.15]
\tikzstyle{inarrow}=[->, >=stealth, shorten >=.03cm,line width=1.5]
\tikzstyle{empty}=[circle,fill=none, draw=none]
\tikzstyle{inputdot}=[circle,fill=purple,draw=purple, scale=.25]
\tikzstyle{inputarrow}=[->,draw=purple, shorten >=.05cm]
\tikzstyle{simple}=[-,draw=purple,line width=1.000]

\tikzstyle{place}=[circle,thick,draw=blue!75,fill=blue!20,minimum size=6mm]
\tikzstyle{red place}=[place,draw=red!75,fill=red!20]
\tikzstyle{transition}=[rectangle,thick,draw=black!75,
  			  fill=black!20,minimum size=4mm]

\usepackage{tikz-cd}

\usepackage{mathtools}

\usepackage{natbib}

\newcommand{\define}[1]{{\bf \boldmath{#1}}}

\mathchardef\mhyphen="2D
\newcommand{\cat}[1]{\textup{\textsf{#1}}}
\newcommand{\Set}{\mathsf{Set}}

\newcommand{\Net}[1]{{#1}\mhyphen \mathsf{Net}}
\newcommand{\qCat}[1]{{#1}\mhyphen \mathsf{Cat}}
\newcommand{\Grph}{\mathsf{Grph}}
\newcommand{\Petri}{\mathsf{Petri}}
\newcommand{\PreNet}{\mathsf{PreNet}}
\newcommand{\CMC}{\mathsf{CMC}}
\newcommand{\Mod}{\mathsf{Mod}}

\newcommand{\SSMC}{\mathsf{SMC}}
\newcommand{\SMC}{\mathsf{MC}}
\newcommand{\QNet}[5]{  \xymatrix{#1 \ar@<+.5ex>[r]^{#3} \ar@<-.5ex>[r]_{#4} & #5 #2}  }
\newcommand{\law}[1]{\mathsf{#1}}

\newcommand{\maps}{\colon}
\newcommand{\Cat}{\cat{Cat}}

\newcommand{\Mnd}{\mathsf{Mnd}}
\newcommand{\Law}{\mathsf{Law}}
\newcommand{\N}{\mathbb{N}}
\newcommand{\Z}{\mathbb{Z}}

\newcommand{\CMon}{\mathsf{CMon}}
\newcommand{\Mon}{\mathsf{Mon}}

\newcommand{\CAT}{\mathsf{CAT}}
\newcommand{\Span}{\mathsf{Span}}

\newcommand{\A}{^\bullet\mathrm{A}}
\newcommand{\B}{^{\bullet}\mathrm{B}}

\newcommand{\backb}{\mathrm{B}^{\bullet}}

\newcommand{\backa}{\mathrm{A}^{\bullet}}


\definecolor{purple(x11)}{rgb}{0.8, 0.9, 0.9}

\newcommand{\Ob}{\mathrm{Ob}}
\newcommand{\Mor}{\mathrm{Mor}}
\newcommand{\Q}{\mathsf{Q}}

\newcommand{\Hom}{\mathrm{Hom}}

\theoremstyle{plain}
\newtheorem{thm}{Theorem}[section]
\newtheorem{prop}[thm]{Proposition}
\newtheorem{lem}[thm]{Lemma}

\theoremstyle{definition}
\newtheorem{defn}[thm]{Definition}
\newtheorem{expl}[thm]{Example}
\newtheorem{rmk}[thm]{Remark}
\title{Petri Nets Based on Lawvere Theories}

\author{Jade Master}

\date{}

\begin{document}
	\maketitle
	\begin{abstract}
		We give a definition of $\law{Q}$-net, a generalization of Petri nets based on a Lawvere theory $\law{Q}$, for which many existing variants of Petri nets are a special case. This definition is functorial with respect to change in Lawvere theory, and we exploit this to explore the relationships between different kinds of $\law{Q}$-nets. To justify our definition of $\law{Q}$-net, we construct a family of adjunctions for each Lawvere theory explicating the way in which $\law{Q}$-nets present free models of $\law{Q}$ in $\Cat$. This gives a functorial description of the operational semantics for an arbitrary category of $\law{Q}$-nets. We show how this can be used to construct the semantics for Petri nets, pre-nets, integer nets, and elementary net systems.
	\end{abstract}

\section{Introduction}
Following the introduction of Petri nets in Carl Petri's 1962 thesis \citep{PetriThesis}, there has been an explosion of work on Petri nets. The bibliography hosted by \textsl{Petri Nets World} has over 8500 citations \citep{Petri}. These papers include many variations of Petri nets which change both the structure of the nets and their semantics. In this work we help organize these definitions by putting some of the more popular variants under a common framework.

Petri nets can be thought of as commutative monoidal graphs: graphs whose edges have elements of a free commutative monoid as their source and target. In this paper we generalize this to graphs which are based in some other algebraic gadget --- as long as that gadget comes from a Lawvere theory.	Petri nets are given by a pair of functions from a set of transitions to the free commutative monoid on a set of places. For a Lawvere theory $\law{Q}$, a $\law{Q}$-net is a pair of functions from a set of transitions to a free model of $\law{Q}$ in $\Set$. 
	
Many instances of this generalization are already studied. In 2000  Bruni, Meseguer, Montanari, and Sassone introduced \emph{pre-nets}; a type of Petri net that has a free non-commutative monoid on its places \citep{functorialsemantics}. The semantics of these showcase the \emph{individual token philosophy}, which distinguishes between identical tokens and keeps track of causality within sequences of processes. In 2018, Herold and Genovese introduced \emph{integer nets}, a type of Petri net which has a free abelian group of places \citep{genovese} and are similar to \emph{lending nets} introduced in \citep{lending}. These are useful for modeling credit in propositional contract logic \citep{PCL}. The last example of $\law{Q}$-nets that we consider are \emph{elementary net systems} \citep{elementary}. These are Petri nets that can have a maximum of one token in each place. The above three examples give categories 
$\PreNet$, $\Net{\law{\Z}}$, and $\Net{\law{SLAT}}$
of pre-nets, integer nets, and elementary net systems respectively. These are given by setting $\law{Q}$ equal to $\law{MON}$ the Lawvere theory of monoids, $\law{ABGRP}$ the Lawvere theory of abelian groups, and $\law{SLAT}$ the Lawvere theory of semi-lattices in the definition of $\Net{\law{Q}}$.

To elegantly illustrate the power of Petri nets, Messeguer and Montanari show that they present free monoidal categories \citep{monoids}. The objects in these monoidal categories are given by the markings of our Petri net and the morphisms represent all possible firing sequences of the transitions in sequence and in parallel. In general, a description of the operational semantics of Petri nets consists of an adjunction between a category of Petri nets and a category of strictly commutative, strictly associative monoidal categories (see Definition \ref{CMCdef}). There are many ways to do this. In \citep{open} the authors construct an adjunction from $\Petri$ into a particular subcategory of $\CMC$, the category of commutative monoidal categories. Here we analyze and alter this adjunction to get an adjunction
\[
\begin{tikzcd}
\Petri \ar[r,bend left,"F"] \arrow[r, phantom, "\bot"]& \CMC \ar[l,bend left,"U"] 
\end{tikzcd}
\]
To justify our definition of $\law{Q}$-nets, we take a similar tack. We show that every $\law{Q}$-net presents a free model of the Lawvere theory $\law{Q}$ in $\Cat$ by constructing an adjunction
\[\begin{tikzcd}  \Net{\law{Q}}  \ar[r,bend left,"F_\Q"] \arrow[r, phantom, "\bot",pos=.7] & \ar[l,bend left, "U_\Q"] \Mod(\Q,\Cat)\end{tikzcd}\]
where $\Mod(\law{Q}, \Cat)$ is the category of models of $\law{Q}$ in $\Cat$. To turn a $\law{Q}$-net into a free model of $\law{Q}$ in $\Cat$, there are two steps which must be completed:
\begin{itemize}
\item the transitions of the $\law{Q}$-net must be freely closed under the operations of $\law{Q}$, and
\item the transitions must be freely turned into a category by freely adding identities and composites.
\end{itemize}
Thus the adjunction $F_{\law{Q}} \dashv U_{\law{Q}}$ is constructed as the composite of two smaller adjunctions corresponding to these steps. This adjunction defines an operational semantics for $\law{Q}$-nets. The objects and morphisms in the free $\law{Q}$-category on a $\law{Q}$-net represent all the possible firing sequences which can be built using composition and the operations and axioms of the Lawvere theory $\law{Q}$.

An outline of this paper is as follows:
\begin{itemize}

\item In Section \ref{defin} we review definitions for Petri nets and their categorical semantics.

\item In Section \ref{QNet} we define $\law{Q}$-nets, and we extend this definition to a functor from the category of Lawvere theories to $\CAT$. This allows us to explore various functors between different kinds of $\law{Q}$-nets. We also show that the category $\Net{\law{Q}}$ is complete and cocomplete.

\item In Section \ref{CMC} we construct a semantics functor for Petri nets as a two part composite. This serves as a blueprint for the more general construction.

\item In Section \ref{gen} we prove the main theorem of this paper. For every Lawvere theory $\law{Q}$, there is an adjunction  \[\begin{tikzcd} \Net{\law{Q}}\ar[r,bend left,"F_\Q"] \ar[r,phantom,"\bot"] & \ar[l,bend left,"U_\Q"] \qCat{\law{Q}} \end{tikzcd}\] which gives the operational semantics of $\law{Q}$-nets.

\item In Section \ref{applications} we show how our main theorem can give categorical descriptions of existing forms of semantics for various types of $\law{Q}$-nets. We show how to build the semantics of Petri nets, pre-nets, and integer nets using the individual and collective token philosophies. We also construct an operational semantics functor for elementary net systems.

\item In Appendix \ref{appendix} we give a brief introduction to Lawvere theories.
\end{itemize}

\section{Petri Nets and Their Executions}\label{defin}
\begin{defn}\label{N}
Let $L \maps \Set \to \CMon$ be the free commutative monoid functor, that is, the left adjoint of the functor $R \maps \CMon \to \Set$ that sends commutative monoids to their underlying sets and monoid homomorphisms to their underlying functions. Let 
\[ \N \maps \Set \to \Set \] be the \define{free commutative monoid monad} given by the composite $R\circ L$.
\end{defn}
\noindent For any set $X$, $\N[X]$ is the set of formal finite linear combinations of elements of $X$ with natural number coefficients.   The unit of $\N$ is given by the natural inclusion of $X$ into $\N[X]$, and for any function $f \maps X \to Y$, $\N[f] \maps \N[X] \to \N[Y]$ is the unique monoid homomorphism that extends $f$.   

\begin{defn}\label{PetriNet}
We define a \define{Petri net} to be a pair of functions of the following form:
\[\xymatrix{ T \ar@<-.5ex>[r]_-t \ar@<.5ex>[r]^-s & \N[S]. } \]
We call $T$ the set of \define{transitions}, $S$ the set of \define{places}, $s$ the \define{source} function, and $t$ the \define{target} function. 
	
\end{defn}

\begin{defn}\label{PetriMorphism}
A \define{Petri net morphism} from the Petri net 
$\xymatrix{ T \ar@<-.5ex>[r]_-t \ar@<.5ex>[r]^-s & \N[S] }$ to
the Petri net $\xymatrix{ T' \ar@<-.5ex>[r]_-{t'} \ar@<.5ex>[r]^-{s'} & \N[S']}$ 
is a pair of functions $(f \maps T \to T', g \maps S \to S')$ such that the diagrams 

	\[
	\xymatrix{ 
		T \ar[d]_f  \ar[r]^-{s} & \N[S] \ar[d]^-{\N[g]} \\	
		T' \ar[r]_-{s'} & \N[S'] 
	}
	\qquad
	\xymatrix{ 
		T \ar[d]_f  \ar[r]^-{t} & \N[S] \ar[d]^-{\N[g]} \\	
		T' \ar[r]_-{t'} & \N[S'] . 
	}
	\]
	commute.
\end{defn}

\begin{defn}
Let $\Petri$ be the category of Petri nets and Petri net morphisms, with composition 
defined by 
\[  (f, g) \circ (f',g') = (f \circ f' , g \circ g')  .\]
\end{defn}
\noindent Our definition of Petri net morphism differs from the earlier definition used by Degano--Meseguer--Montanari \cite{DMM} and Sassone \cite{SassoneStrong,SassoneAxiom,functorialsemantics}. The difference is that our definition requires that the homomorphism between free commutative monoids come from a function between the sets of places whereas the above references allow arbitrary commutative monoid homomorphisms. This difference of definition is present in our definition of the category of $\law{Q}$-nets as well. With this change, the categories $\Petri$ and $\Net{\law{Q}}$ become complete and cocomplete as shown in Proposition \ref{complete}.

Petri nets have a natural semantics which is described by \emph{the token game}. This is a game where each place of a Petri net is equipped with a natural number of tokens. Players are then allowed to shuffle the tokens around in proportion given by their source and target. The token game is formalized by the notions of marking and firing. 
\begin{defn}
A \define{marking} of a Petri net $P = \begin{tikzcd}T \ar[r,shift left=.5ex,"s"] \ar[r,shift right=.5ex,"t",swap] & \N[S] \end{tikzcd}$ is an element $m \in \N[S]$, or equivalently, a function $m \maps S \to \N$ which is zero on all but a finite number of elements. A \define{firing} of $P$ is a tuple $(\tau,x,y)$, where $\tau$ is a transition and $x$ and $y$ are markings of $P$ with $x - s(\tau) \geq 0$ and $x-s(\tau) + t(\tau) = y$.
\end{defn}
Firings can be chained together in sequence: for a firing $(\tau,x,y)$ and a firing $(\sigma,y,z)$ we can define their composite as a tuple $(\sigma \circ \tau,x,z)$ where $\circ$ is a formal symbol. Firings can also be performed in parallel: for two firings $(\tau,x,y)$ and $(\tau',x',y')$ there is a parallelization $(\tau +\tau', x+x',y+y')$. This suggests that firings of a Petri net have the structure of a monoidal category. Meseguer and Montanari were the first to notice this and show how Petri nets can be turned into commutative monoidal categories \cite{monoids}.
\begin{defn}\label{CMCdef}
	A \define{commutative monoidal category} is a commutative monoid object internal to $\Cat$. Explicitly, a commutative monoidal category is a strict monoidal category $(C,\otimes,I)$, such that, for all objects $a$, $b$ and morphisms $f$, $g$ in $C$ 
	\[a \otimes b = b \otimes a \text{ and } f \otimes g = g \otimes f.\]
\end{defn}
 
\noindent Note that a commutative monoidal category is the same as a strict symmetric monoidal category where the symmetry isomorphisms $\sigma_{a,b} \maps a \otimes b \stackrel{\sim}{\longrightarrow} b \otimes a$ are all identity morphisms. In fact, a commutative monoidal category is precisely a category where the objects and morphisms form commutative monoids and the structure maps are commutative monoid homomorphisms. A commutative monoidal category where the morphisms represent sequences of firings of a Petri net $P$ will be referred to as a semantics for $P$. In this paper we characterize this semantics construction as an adjunction between the category of Petri nets and the following category.
\begin{defn}
	Let $\CMC$ be the category whose objects are commutative monoidal categories and whose morphisms are strict monoidal functors.
\end{defn}
Note that every monoidal functor between commutative monoidal categories is automatically a strict symmetric monoidal functor, so the adjective symmetric is not included in the above definition.

\section{Q-Nets}\label{QNet}
Petri nets need not have a free commutative monoid of places, and this aspect can be generalized using Lawvere theories. A review of the basic definitions and properties of Lawvere theories can be found in the appendix. As in Definition \ref{models}, let $\Mod(\law{Q})$ be the category of models of $\law{Q}$ in $\Set$,
\[ \begin{tikzcd} \Set \ar[r,bend left,"L_\Q",pos=.6] \ar[r,phantom,"\bot",pos=.6] & \ar[l,bend left,"R_\Q",pos=.4] \Mod(\law{Q}) \end{tikzcd}\]
be the adjunction generating free models of $\law{Q}$ and let $M_\law{Q}$ be the composite $R_\law{Q} \circ L_\law{Q}$.
\begin{defn}\label{QNetd}	
	Let $\Net{\law{Q}}$ be the category where
\begin{itemize}
	    \item objects are $\law{Q}$\define{-nets}, i.e. pairs of functions of the form
	    \[ \begin{tikzcd}
	    T \ar[r, shift left=.5ex, "s"] \ar[r, shift right=.5ex,"t",swap] & M_{\law{Q}} S
	    \end{tikzcd} \]
		\item a morphism from the $\law{Q}$-net 
$\xymatrix{ T \ar@<-.5ex>[r]_-t \ar@<.5ex>[r]^-s & M_\law{Q} S }$ to
the $\law{Q}$-net $\xymatrix{ T' \ar@<-.5ex>[r]_-{t'} \ar@<.5ex>[r]^-{s'} & M_\law{Q} S'}$ 
is a pair of functions $(f \maps T \to T', g \maps S \to S')$ such that the following diagrams commute:
	\[
	\xymatrix{ 
		T \ar[d]_f  \ar[r]^-{s} & M_\law{Q} S \ar[d]^-{M_\law{Q} g} \\	
		T' \ar[r]_-{s'} & M_\law{Q} S' 
	}
	\qquad
	\xymatrix{ 
		T \ar[d]_f  \ar[r]^-{t} & M_\law{Q}S \ar[d]^-{M_\law{Q} g} \\	
		T' \ar[r]_-{t'} & M_\law{Q} S' . 
	}
	\]
	\end{itemize}
\end{defn}
	This definition is a construction. Let $M_\law{Q}$ be as before and let $M_{\law{R}} \maps \Set \to \Set$ be corresponding monad induced by a Lawvere theory $\law{R}$. Every morphism of Lawvere theories $f \maps \law{Q} \to \law{R}$ induces a functor
	\[f_* \maps \Mod(\law{R}) \to  \Mod(\law{Q})\]
	which composes every model of $\law{R}$ with $f$. A left adjoint
	\[f^* \maps \Mod(\law{Q}) \to \Mod(\law{R}) \] is given by the left Kan extension of each model along $f$ \cite{ttt,buckley}. Now, we have the following commutative diagram of functors
	
	\[
	\xymatrix{\Mod(\law{Q}) \ar[dr]_{R_\law{Q}}  & &\ar[ll]_{f_*} \Mod(\law{R}) \ar[dl]^{R_{\law{R}}} \\
	& 
	 \Set  & }
	\]
	all of which have left adjoints. Given this set of assumptions, there is a morphism of monads $M^f$ given by
	\[M^f = R_\law{Q} \eta L_\law{Q} \maps M_\law{Q} \Rightarrow  M_{\law{R}} \]
	where $\eta$ is the unit of the adjunction $f^* \dashv f_*$. This can either be verified directly, or by using the adjoint triangle theorem \cite{triangles}. In what follows we will use this morphism of monads to translate between different types of generalized $\law{Q}$-nets.

\begin{defn}\label{netf}
	    Let 
	    \[ \Net{(-)}  \maps \Law \to \Cat \]
	    be the functor which sends a Lawvere theory $\law{Q}$ to the category $\Net{\law{Q}}$ and sends a morphism $f\maps \law{Q} \to \law{R}$ of Lawvere theories to the functor  $\Net{f} \maps \Net{\law{Q}} \to \Net{\law{R}}$ which sends a $\law{Q}$-net 
            \[
            \begin{tikzcd}
            T \ar[r, shift left=.5ex, "s"] \ar[r,shift right=.5ex,"t",swap] & M_{\law{Q}} S
            \end{tikzcd}
            \]
to the $\law{R}$-net
            \[ 
            \begin{tikzcd}
            T \ar[r, shift left=.5ex, "s"] \ar[r,shift right=.5ex,"t",swap] & M_{\law{Q}} S \ar[r,"M_S^{f}"] & M_{\law{R}} S
            \end{tikzcd} 
            \] 

\noindent For a morphism of $\law{Q}$-nets $(g\maps T \to T',h \maps S \to S')$, $\Net{f} (g,h)$ is $(g,h)$. This is well-defined because of the naturality of $M^f$.
\end{defn}
\noindent Varying the Lawvere theory $\Q$ gives many known types of Petri nets.
\begin{expl}
Setting $\Q$ equal to $\law{CMON}$, the Lawvere theory for commutative monoids, we obtain the category of Petri nets.
\end{expl}

\begin{defn}\label{prenetdef}
Let $(-)^* \maps \Set \to \Set$ denote the monad that the Lawvere theory $\law{MON}$ induces via the correspondence in \cite{linton}. For a set $X$, $X^*$ is given by the underlying set of the free monoid on $X$.
 A \define{pre-net} is a pair of functions of the form
 \[
 \begin{tikzcd}
 T \ar[r, shift left=.5ex, "s"] \ar[r, shift right=.5ex, "t",swap] & S^*
 \end{tikzcd}
 \]
 A morphism of pre-nets from a pre-net $(s,t \maps T \to S^*)$ to a pre-net $(s',t'\maps T' \to S'^*)$ is a pair of functions $(f \maps T \to T', g\maps S \to S')$ which preserves the source and target as in Definition \ref{PetriMorphism}. This defines a category $\PreNet$.
\end{defn}
\begin{expl} \label{prenet}
If we take $\law{Q}= \law{MON}$, the Lawvere theory of monoids, we get the category $\PreNet$.
\end{expl} \noindent A description of $\law{MON}$ can be found in the Appendix. $\PreNet$ has the same objects as the category introduced in \emph{Functorial Models for Petri Nets} \cite{functorialsemantics} but the morphisms are restricted as in Definition \ref{PetriMorphism}. Pre-nets are the same as tensor schemes introduced by Joyal and Street in "The Geometry of Tensor Calculus I" \cite{scheme}. The authors define a notion of free category on a tensor scheme and Bruni, Meseguer, Montanari, and Sassone construct an adjunction between pre-nets and a subcategory of the category of strict symmetric monoidal categories $\SSMC$ \cite{functorialsemantics}.

In Section \ref{prenetapp} we construct a closely related adjunction 
	
	\[\begin{tikzcd}
	\PreNet \ar[r,bend left,"Z"] \ar[r,phantom,"\bot"] & \ar[l,bend left, "K"] \SSMC
	\end{tikzcd} \]which does not require the restriction to a subcategory of $\SSMC$. Pre-nets are useful because after forming an appropriate quotient, the category $Z(P)$ for a pre-net $P$ is equivalent the category of \textit{strongly concatenable processes} which can be performed on the net. This equivalence is important for realizing the individual token philosophy \cite{functorialsemantics}. The individual token philosophy, as opposed to the collective token philosophy, gives identities to the individual tokens and keeps track of the causality in the executions of a Petri net.
\begin{expl}
In 2013 Bartoletti, Cimoli, and Pinna introduced lending Petri nets \cite{lending}. These are Petri nets where arcs can have a negative multiplicity and tokens can be borrowed in order to fire a transition. Lending nets are also equipped with a partial labeling of the places and transitions so they can be composed and are required to have no transitions which can be fired spontaneously. In 2018 Genovese and Herold introduced integer nets \cite{genovese}. Let $\law{ABGRP}$ be the Lawvere theory of abelian groups. This Lawvere theory contains three generating operations
\[e \maps 0 \to 1 \text{,  } i \maps 1 \to 1 \text{, and } m \maps 2 \to 1\]
representing the identies, inverses, and multiplication of an abelian group. These generating morphisms are required to satisfy the axioms of an abelian group; associativity, commutativity, and the existence of inverses and an identity.
The category of integer nets, modulo a change in the definition of morphisms, can be obtained by taking $\law{Q} = \law{ABGRP}$ in the definition of $\Net{\law{Q}}$.
\begin{defn}\label{integernets}
    Let $\Z \maps \Set \to \Set$ be the free abelian group monad which for a set $X$ generates the free abelian group $\Z [X]$ on the set $X$. Note that $\Z$ is the monad induced by the Lawvere theory $\law{ABGRP}$ via the correspondence in \cite{linton}. An \define{integer net} is a pair of functions of the form
    \[\xymatrix{ T \ar@<-.5ex>[r]_-t \ar@<.5ex>[r]^-s & \Z[S]. } \]
    A \define{morphism of integer nets} is a pair $(f \maps T \to T', g \maps S\to S')$ which makes the diagrams analogous to the definition of Petri net morphism (Definition \ref{PetriMorphism}) commute. 
    Let $\Net{\Z}$ be the category where objects are integer nets and morphism are morphisms of integer nets.
\end{defn}
\noindent Integer nets are useful for modeling the concepts of credit and borrowing. There is a correspondence between lending Petri nets and propositional contract logic; a form of logic useful for ensuring that complex networks of contracts are honored \cite{lending}. Genovese and Herold constructed a categorical semantics for integer nets \cite{genovese}. In Section \ref{applications} we will construct a variation of this semantics which uses the general framework developed in this paper.
\end{expl}

\begin{expl}
Elementary net systems, introduced by Rozenberg and Thiagarajan in 1986, are are Petri nets with a maximum of one edge between a given place and transition \cite{elementary}.

\begin{defn}
 An \define{elementary net system} is a pair of functions 
 \[
 \begin{tikzcd}
 T  \ar[r, shift left=.5ex, "s"] \ar[r, shift right=.5ex, "t", swap] & 2^S
 \end{tikzcd}
 \] 
 where $2^S$ denotes the power set of $S$.
\end{defn}

\noindent Elementary net systems can be obtained from our general formalism. Let $\law{SLAT}$ be the Lawvere theory for semi-lattices, i.e. commutative idempotent monoids. This Lawvere theory contains morphisms
\[ m \maps 2 \to 1 \text{ and } e \maps 0 \to 1\]
as in $\law{MON}$ the theory of monoids. Also similar to $\law{MON}$, $\law{SLAT}$ is quotiented by the associativity and unitality axioms given in Example \ref{mon}. In addition, $\law{SLAT}$ has the following axioms representing commutativity and idempotence
\[
\begin{tikzcd}
2 \ar[rr, "\sigma"] \ar[dr, "m",swap] & & 2 \ar[dl, "m"] \\
& 1 &
\end{tikzcd}
\quad \quad 
\begin{tikzcd}
1\ar[rr, "\Delta"] \ar[dr, "id",swap] & & 2 \ar[dl, "m"] \\
& 1 &
\end{tikzcd}
\]
where $\sigma \maps 2 \to 2$ is the braiding of the cartesian product and $\Delta \maps 1 \to 2$ is the diagonal. For models in $\Set$, The first diagram says that you can multiply two elements in either order and the get the same thing. The second diagram says that if you multiply an element by itself you get itself. As in Definition \ref{models}, $\law{SLAT}$ corresponds to a monad on $\Set$. It is well known that this monad is the covariant power set monad

\[
2^{(-)} \maps \Set \to \Set
\]
which sends a set $X$ to its power set and a function to the mapping which sends subsets of $X$ to their image. This motivates the following:
\begin{defn}
 Let $\Net{\law{SLAT}}$ be the category of elementary net systems obtained as in Definition \ref{QNet} for $\law{Q}= \law{SLAT}$.
\end{defn} 
\end{expl}
The functorial nature of Definition \ref{QNet} can be exploited to generate functors between different categories of $\law{Q}$-nets. 
There is the diagram in $\cat{Law}$
\begin{center}
\begin{tikzcd}
\law{SLAT}  &            &\\
\law{CMON} \ar[r,"b"] \ar[u,"a"] & \law{ABGRP} &\\
\law{MON} \ar[u,"c"] \ar[r,"d",swap] & \law{GRP} \ar[u,"e",swap]
\end{tikzcd}
\end{center}
where all the morphisms send their generating operations to their counterparts in the Lawvere theory of their codomain. These target Lawvere theories either have extra axioms or operations making the above functors not necessarily full or faithful:
\begin{itemize}
    \item $a$ sends the morphism $m \circ \Delta$ in $\law{CMON}$ to $id_1 \maps 1 \to 1$ in $\law{SLAT}$ to impose the idempotent law. All other other generating components are sent to their natural counterparts.
    \item $b$ and $d$ send every object and morphism to its natural analog. However, $\law{ABGRP}$ and $\law{GRP}$ have an extra operation $i \maps 1 \to 1$ representing inverses. This makes the functors $b$ and $d$ faithful but not full.
    \item $c$ and $e$ add the commutativity law; they send both the multiplication $m \maps 2 \to 1$ and the composite $\sigma \circ m\maps 2 \to 2$ of the braiding $\sigma \maps 2 \to 2$ to the multiplication map $m \maps 2 \to 2$ in the target Lawvere theory. This makes $c$ and $e$ not faithful.
\end{itemize}

\noindent Definition \ref{netf} can be used to give a network between different flavors of $\law{Q}$-nets. By applying $\Net{(-)}$ to the above diagram we get the diagram of categories

\begin{center}
\begin{tikzcd}
\Net{\law{SLAT} } &            &\\
\Petri \ar[r,"\Net{b}"] \ar[u,"\Net{a}"] & \Net{\Z} &\\
\PreNet \ar[u,"\Net{c}"] \ar[r,"\Net{d}",swap] & \Net{\law{GRP}} \ar[u,"\Net{e}",swap]
\end{tikzcd}
\end{center}
The functors in this diagram are described as follows:
\[\Net{c} \maps \PreNet \to \Petri \]
 is often called \textit{abelianization} because it sends a pre-net to the Petri net which forgets about the ordering on the input and output of each transition. The authors of \cite{functorialsemantics} use $\Net{c}$ to explore the relationship between pre-nets and Petri nets. The functor $\Net{e}\maps \Net{\law{GRP}} \to \Net{\Z}$ gives the analogous relationship for integer nets.
\[\Net{b} \maps \Petri \to \Net{\Z}\]
is the functor which does not change the source and target of a given place. The only difference is that the markings of a $\Z$-net coming from a Petri net are thought of as elements of a free abelian group rather than a free abelian monoid. $\Net{d}$ is the analogous functor for pre-nets. 
\[ \Net{a} \maps \Petri \to \Net{\law{SLAT}} \]
is the functor which sends a Petri net to the $\law{SLAT}$-net which forgets about the multiplicity of the edges between a given source and transition. 

Before moving on to the semantics of $\law{Q}$-nets, we discuss a property of the category $\Net{\law{Q}}$.

\begin{prop}\label{complete}
$\Net{\law{Q}}$ is cocomplete.
\end{prop}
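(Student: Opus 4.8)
\section*{Proof proposal}

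The plan is to realize $\Net{\law{Q}}$ as the comma category $\mathrm{id}_{\Set}\downarrow G$, where $G\maps\Set\to\Set$ sends $S$ to $M_{\law{Q}}S\times M_{\law{Q}}S$: a pair of functions $s,t\maps T\to M_{\law{Q}}S$ is the same thing as a single function $T\to GS$, and the pair of commuting squares in Definition \ref{QNetd} is exactly a morphism of this comma category. Since $\mathrm{id}_{\Set}$ preserves all colimits and $\Set$ is cocomplete, colimits in $\Net{\law{Q}}$ are componentwise and the cocompleteness half is easy; but because $M_{\law{Q}}$ need not preserve limits (already $\Z[-]$ fails to preserve pullbacks), completeness requires more care and I would construct the limits by hand. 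By the usual reduction it is enough to build small (co)products and (co)equalizers.

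For cocompleteness I would show that the colimit of a diagram $j\mapsto(T_j\rightrightarrows M_{\law{Q}}S_j)$ is $\bigl(\mathrm{colim}_j T_j\rightrightarrows M_{\law{Q}}(\mathrm{colim}_j S_j)\bigr)$, where the source map out of $\mathrm{colim}_j T_j$ is induced by the cocone $T_j\xrightarrow{s_j}M_{\law{Q}}S_j\xrightarrow{M_{\law{Q}}\iota_j}M_{\law{Q}}(\mathrm{colim}_j S_j)$ (and dually for the target); compatibility of this cocone is precisely the source/target condition on the arrows of the diagram, and the universal property is a routine chase. In particular the coproduct is $\bigl(\coprod_i T_i\rightrightarrows M_{\law{Q}}(\coprod_i S_i)\bigr)$, and the coequalizer of two morphisms $(f,g),(f',g')$ from $(T_1\rightrightarrows M_{\law{Q}}S_1)$ to $(T_2\rightrightarrows M_{\law{Q}}S_2)$ has place set $\mathrm{coeq}(g,g')$, say with quotient $q$, and transition set $\mathrm{coeq}(f,f')$; the one thing to check is that $M_{\law{Q}}q\circ s_2$ coequalizes $f$ and $f'$, which follows from the morphism identities $s_2 f = M_{\law{Q}}g\cdot s_1$ and $s_2 f' = M_{\law{Q}}g'\cdot s_1$ together with $qg=qg'$.

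For completeness I would carry the source and target values as explicit bookkeeping data, so that no property of $M_{\law{Q}}$ is needed. The binary product of $(T_1\rightrightarrows M_{\law{Q}}S_1)$ and $(T_2\rightrightarrows M_{\law{Q}}S_2)$ has place set $S_1\times S_2$ and transition set
\[ T=\{(x_1,x_2,u,v):x_i\in T_i,\ u,v\in M_{\law{Q}}(S_1\times S_2),\ M_{\law{Q}}(p_i)(u)=s_i(x_i),\ M_{\law{Q}}(p_i)(v)=t_i(x_i)\ \text{for }i=1,2\}, \]
with source $(x_1,x_2,u,v)\mapsto u$ and target $(x_1,x_2,u,v)\mapsto v$ (equivalently, $T$ is a pullback in $\Set$); arbitrary small products are the same with $\prod$ in place of $\times$, and the empty product recovers the terminal $\law{Q}$-net $(M_{\law{Q}}1\times M_{\law{Q}}1\rightrightarrows M_{\law{Q}}1)$ with projections as source and target. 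The equalizer of $(f,g),(f',g')$ has place set $S=\{a\in S_1:g(a)=g'(a)\}$ with inclusion $e$, and transition set $T=\{(x,u,v):x\in T_1,\ f(x)=f'(x),\ u,v\in M_{\law{Q}}S,\ M_{\law{Q}}(e)(u)=s_1(x),\ M_{\law{Q}}(e)(v)=t_1(x)\}$, again reading off $u$ and $v$ as source and target. In each case the cone legs are the evident functions on places together with the projection onto the $x$-coordinates on transitions, and the universal property is forced: a competing cone determines its place component by the (co)limit in $\Set$, and then the requirement that the comparison be a $\law{Q}$-net morphism forces its transition component (its $x$-part is the tuple of given maps, its $u$- and $v$-parts are $M_{\law{Q}}$ of the place component applied to the given source and target), which simultaneously shows the comparison lands in $T$ and is unique.

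The only genuine obstacle is the observation driving the whole argument: limits in $\Net{\law{Q}}$ are \emph{not} computed componentwise as in $\Set$, since $M_{\law{Q}}$ is in general neither continuous nor even pullback-preserving. Once one sees that the cure is to replace the componentwise limit of transition sets by the subset cut out by the source/target compatibility equations --- carrying the relevant elements of $M_{\law{Q}}$ of the limiting place set as extra data --- every verification is uniform and elementary. One could alternatively avoid the hand computation: $\mathrm{id}_{\Set}$ and $G$ are accessible functors between locally presentable categories, so the comma category $\mathrm{id}_{\Set}\downarrow G$ is locally presentable and hence both complete and cocomplete; but the explicit description of the (co)limits above is worth recording for later use.
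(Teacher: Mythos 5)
Your argument is correct, and it opens with the same move as the paper: identify $\Net{\law{Q}}$ with the comma category $\Set \downarrow G$, where $G$ sends $S$ to $M_\law{Q}S \times M_\law{Q}S$. After that the two proofs part ways. The paper invokes Theorem 3, Section 5.2 of \emph{Computational Category Theory}, quoted as saying that $(S \downarrow T)$ is (co)complete when the left leg $S$ is (co)continuous and both source categories are (co)complete, and applies it with $S = 1_{\Set}$; that citation disposes of cocompleteness exactly as your componentwise colimit computation does, but for completeness the standard comma-category criterion dualizes to a continuity hypothesis on the \emph{right} leg, and $G$ is not continuous (it does not even preserve the terminal object, and as you note $\Z[-]$ fails to preserve pullbacks). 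So the completeness half is precisely where an argument specific to the left leg being an identity is needed, and that is what your explicit construction supplies: place set the limit of the place sets, transition set the pullback of $\lim T_j \to \lim GS_j \leftarrow G(\lim S_j)$, i.e.\ transitions carrying the elements $u,v \in M_\law{Q}(\lim S_j)$ as extra data. This is exactly the limit in $\Set \downarrow G$, and your product, terminal object, and equalizer descriptions and their universal properties check out. What each approach buys: the paper's proof is two lines modulo the reference, while yours is self-contained, records usable formulas for (co)limits in $\Net{\law{Q}}$, and makes visible that limits are not computed componentwise on transitions. Your locally presentable alternative is also sound, with the small caveat that accessibility of the two legs by itself only gives accessibility of the comma category; one still needs its cocompleteness (which you have already established) to conclude local presentability and hence completeness.
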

\begin{proof}
We can construct $\Net{\law{Q}}$ as the comma category. 
\[  \Set \downarrow (\times \circ \Delta \circ M_\law{Q} ) \]
where $M_\law{Q} \maps \Set \to \Set$ is the monad corresponding to the Lawvere theory $\law{Q}$, $\Delta \maps \Set \to \Set \times \Set$ is the diagonal, and $\times \maps \Set \times \Set \to \Set$ is the cartesian product in $\Set$. An object in this category is a map
\[T \to M_\law{Q} S \times M_\law{Q} S\]
\noindent which corresponds to a pair of maps $s,t \maps T \to M_\law{Q} S$ which become the source and target maps of a $\law{Q}$-net. Morphisms in this comma category are commutative squares

\[ 
\xymatrix{ T\ar[r] \ar[d]_{f} & M_\law{Q} S \times M_\law{Q} S \ar[d]^{M_{\law{Q}} g \times M_{\law{Q}} g } \\
            T' \ar[r] & M_\law{Q} S' \times M_\law{Q} S'}
\]
giving a map of $\law{Q}$-nets $(f \maps T \to T',g \maps S \to S')$. The commutativity of the above square ensures that this map of $\law{Q}$-nets is well-defined. 

Theorem 3, Section 5.2 of \textit{Computational Category Theory} \cite{comp} says that given $S \maps A \to C$ and $T \maps B \to C$ then the comma category $(S \downarrow T)$ is cocomplete if
\begin{itemize}
\item $S$ is cocontinuous and,
\item $A$ and $B$ are cocomplete,
\end{itemize}
Because $\Set$ is cocomplete and the identity functor $1_{\Set} \maps \Set \to \Set$ preserves all colimits, we have that $\Set \downarrow (\times \circ \Delta \circ M_\law{Q} )$ is cocomplete. Because $\Net{\law{Q}}$ is equivalent to this category, it is cocomplete as well.
\end{proof}
\section{Generating Free Commutative Monoidal Categories From Petri Nets}\label{CMC}
In this section we examine in detail the motivating example for the main result of this paper, an adjunction generating the semantics of $\law{Q}$-nets for every Lawvere theory $\law{Q}$. This result can feel abstract on its own and the example of Petri nets provides invaluable intuition. A confident reader may skip this section, as it is not strictly necessary for the rest of this paper.

The operational semantics for Petri nets will take the form of an adjunction
\[
\begin{tikzcd}
\Petri \ar[r,bend left,"F"] \ar[r,phantom,"\bot"] & \CMC \ar[l,bend left,"U"] 
\end{tikzcd}
\]
For a given Petri net $P$, this adjunction will be constructed in two steps: first the transitions of $P$ will be freely closed under a commutative monoidal sum and then freely closed under composition. This will take the form of factoring the adjunction into the composite
\[
\begin{tikzcd}
\Petri \ar[r,bend left,"\A"] \ar[r,phantom,"\bot",pos=.6] & \Grph(\CMon) \ar[l,bend left,"\backa",pos=.45] \ar[r,phantom,"\bot",pos=.4] \ar[r,bend left,"\B"] & \CMC \ar[l,bend left,"\backb",pos=.55].
\end{tikzcd}
\]
Here a left adjoint is indicated by a bullet on the left and a right adjoint is indicated by a bullet on the right. $\Grph(\CMon)$ is the category of graphs internal to $\CMon$.
\begin{defn}
A \define{commutative monoidal graph} is a graph
\[
\begin{tikzcd}
E \ar[r,shift left=.5ex,"s"] \ar[r, shift right=.5ex,"t",swap] & V
\end{tikzcd}
\]
where $E$ and $V$ are commutative monoids and $s$ and $t$ are commutative monoid homomorphisms. A morphism of commutative monoidal graphs is a tuple of commutative monoid homomorphisms $(f \maps E \to E', g \maps V \to V')$ making the diagrams
	\[
	\xymatrix{ 
		E \ar[d]_f  \ar[r]^-{s} & V \ar[d]^-{g} \\	
		E' \ar[r]_-{s'} & V' 
	}
	\qquad
	\xymatrix{ 
		E \ar[d]_f  \ar[r]^-{t} & V \ar[d]^-{g} \\	
		E' \ar[r]_-{t'} & V' 
	}
	\]commute. This defines a category $\Grph(\CMon)$ where objects are commutative monoidal graphs and morphisms are as above. In short, $\Grph(\CMon)$ is the category of graphs internal to $\CMon$.
\end{defn}
We will now define these adjunctions but omit the proofs that they are indeed well-defined adjunctions, as this follows from the more general results of Section \ref{gen}.
The left adjoint $\A \maps \Petri \to \Grph(\CMon)$ is defined as follows:
\begin{defn}\label{A}
Let 
\[ \A \maps \Petri \to \Grph(\CMon)\]
be the functor which sends a  Petri net 
\[
\begin{tikzcd}
P = T \ar[r,shift left=.5ex,"s"] \ar[r, shift right=.5ex,"t",swap] & \N[S]
\end{tikzcd}
\]
to the commutative monoidal graph
\[
\begin{tikzcd}
\A P = LT \ar[r,shift left=.5ex,"\phi^{-1}(s)"] \ar[r,shift right=.5ex,"\phi^{-1}(t)",swap] & LS
\end{tikzcd}
\]
where $L$ is the left adjoint of the adjunction in Definition \ref{N} and $\phi \maps \Hom (LT, LS) \xrightarrow{\sim} \Hom(T,RLS)$ is the natural isomorphism of that adjunction. $\A$ sends a morphism of Petri nets
\[(f \maps T \to T', g \maps S \to S')\]
to the morphism of commutative monoidal graphs given by 
\[(Lf \maps LT \to LT',Lg \maps LS \to LS') \]
\end{defn}
In words, $\A$ freely generates a commutative monoidal structure on the transitions of a Petri net and $\A$ uniquely extends each component of a Petri net morphism to a commutative monoid homomorphism. 
The right adjoint of this functor is non-trivial:
\begin{defn}
Let 
\[\backa \maps \Grph(\CMon) \to  \Petri \]
be the functor which sends a commutative monoidal graph
\[\begin{tikzcd}
Q=E \ar[r,shift left=.5ex,"s"] \ar[r, shift right=.5ex,"t",swap] & V
\end{tikzcd} \]
to the Petri net 
\[ 
\begin{tikzcd}
\backa Q = \bar{E} \ar[r,shift left=.5ex,"\bar{s}"] \ar[r, shift right=.5ex,"\bar{t}",swap] & \N[RV]
\end{tikzcd}
\]
$\bar{E}$ is defined as 
\[\bar{E}= \{(e,x,y) \in RE \times \N[RV] \times \N[RV] | R\epsilon_V(x)=s(e) \text{ and } R \epsilon_V(y)=t(e) \} \]
where $\epsilon_V$ is the counit of the adjunction $L \dashv R$. $\bar{s}$ and $\bar{t}$ are given by the projection of $\bar{E}$ onto its first and second coordinates respectively. $\backa$ sends a morphism of commutative monoidal graphs
\[(f\maps E \to E',g\maps V \to V')\]
to the morphism of Petri nets 
\[(h \maps \bar{E} \to \bar{E'}, Rg \maps RV \to RV') \]
where $h$ is the function which makes the assignment
\[(e,x,y) \mapsto (\phi(f)(e),\N[Rg](x),\N[Rg](y) ) \]
\end{defn}
\begin{rmk}\label{failure}
Petri nets must have a free commutative monoid of places, so it is necessary to regard $RV$ as the set of places for $\backa Q$ rather than having $V$ be the commutative monoid of places itself. The reader at this point may guess a simpler formula for the right adjoint $\backa$ which keeps the $RE$ as the set of transitions and uses the unit of $\N$ to construct the source and target maps. Unfortunately this construction is doomed to fail. For a commutative monoidal graph $\begin{tikzcd}E \ar[r,shift left=.5ex,"s"] \ar[r,shift right=.5ex,"t",swap] & V \end{tikzcd}$, suppose that the right adjoint $\A$ sends this graph to the Petri net
\[
\begin{tikzcd}
RE \ar[r,shift left=.5ex,"\eta \circ Rs"] \ar[r,shift right=.5ex,"\eta\circ Rt",swap]& \N[RV].
\end{tikzcd}
\]
A problem arises because this process unnaturally chunks the source and target of each transition. To see this consider the commutative monoidal graph 
\[
\begin{tikzcd}
Q= \N[\tau] \ar[r,shift left=.5ex] \ar[r,shift right=.5ex] & \N[\{a,b,c\}] \cong \N^3
\end{tikzcd}
\]The edge $\tau$ in $Q$ can be depicted as
\[
\begin{tikzpicture}[node distance=1.3cm,>=stealth',bend angle=45,auto]
\node [place] (a) [label=left:$a$] at (0,0) {};
\node [place] (b) [label=left:$b$] at (0,-1) {};
\node [place] (c) [label=above:$c$] at (2,-.5) {};
\node [transition] (t) [label=$\tau$] at (1,-.5) {}
    edge [pre]          (a)
    edge [pre]          (b)
    edge [post]         (c);
\end{tikzpicture}
\]
With the above (faulty) description, $\backa Q$ is given by
\[
\begin{tikzcd}
\N[\tau] \ar[r,shift left=.5ex] \ar[r, shift right=.5ex] & \N[\N^3]
\end{tikzcd}
\]
To avoid confusion, we denote the outer sum in $\N[\N^3]$ by $\times$ and the sum in $\N^3$ by $+$. Then, the faulty $\backa$ would turn $\tau$ into the transition
\[
\begin{tikzpicture}[node distance=1.3cm,>=stealth',bend angle=45,auto]
\node [place] (ab) [label=above:$a+b$] at (-1,0) {};
\node [place] (c) [label=above:$c$] at (1,0) {};
\node [transition] (t) [label=$\tau$] at (0,0) {}
    edge [pre]          (ab)
    edge [post]         (c);
\end{tikzpicture}
\]
To find a counit for this adjunction we seek a morphism
\[ \A \backa Q \to Q  \]
A morphism of this sort is defined by its assignment on generators. A natural choice of morphism sends the places $a+b$ to the sum of the places $a$ and $b$ using the counit of $L \dashv R$. However, then the assignment of $\tau \mapsto \tau$ does not respect the source of $\tau$ and is therefore not a morphism of commutative monoidal graphs. The problem is that we want the source of $\tau$ in $\A \backa Q$ to be $a \times b$ and not $a+b$. To fix this we force the source of $\tau$ to be $a\times b$ by upgrading $\tau$ to the tuple $(\tau, a \times b, c)$ in $\N[\tau] \times \N[\N^3] \times \N[\N^3]$. Now, the natural choice for the counit which sends $a \times b$ to $a +b$ respects the source of $\tau$.
\end{rmk}

The next part of the semantics adjunction for Petri nets freely generates the structure of a category on a given commutative monoidal graph. In Section \ref{gen} this is accomplished by rephrasing this construction in terms of free monoids. Here we provide an explicit description in the case of Petri nets. 

\begin{defn}
Let 
\[ \backb \maps \CMC \to \Grph(\CMon)\]
be the forgetful functor which sends a commutative monoidal category to its underlying commutative monoidal graph and a strict monoidal functor to its underlying morphism of commutative monoidal graphs. Then $\backb$ has a left adjoint 
\[ \B \maps \Grph(\CMon) \to \CMC\]
which sends a commutative monoidal graph
\[ 
\begin{tikzcd}
Q= E \ar[r,shift left=.5ex,"s"] \ar[r,shift right=.5ex,"t",swap] & V
\end{tikzcd}
\]
to the commutative monoidal category $\B Q$ with objects given by $V$ and morphisms generated inductively by the rules:
\begin{itemize}
    \item for every edge $e \in E$ a morphism $e \maps s(e) \to t(e)$,
    \item for every pair of morphisms $e \maps x \to y$ and $d \maps y \to z$, a morphism $d \circ e \maps x \to z$,
    \item for every object $v \in V$ a morphism $1_v \maps v \to v$
\end{itemize}
This defines an evident composition operation $\circ$ on $\B Q$.
There is also a sum on the $\B Q$ defined using the sum of $V$ on objects. If $e$ and $e'$ are edges of $Q$ then the morphisms $e \maps x \to y$ and $e' \maps x' \to y'$ already have a sum given by
\[e+e' \maps x +x' \to y+y' \]
 The morphisms of $\A Q$ are quotiented by the axioms:
\begin{itemize}
    \item for all tuples of morphisms $(f \maps a \to b, g \maps b \to c, h \maps c \to d)$ 
    \[(f \circ g) \circ h = f \circ (g \circ h)\]
    \item for all morphisms $f \maps x \to y$ 
    \[1_y \circ f = f = f \circ 1_x \] 
    \item We require that composition is a commutative monoid homomorphism. For tuples of morphisms $(e \maps x \to y, d \maps y \to z, e' \maps x' \to y', d' \maps y' \to z')$ we can form their sum and composite in two different ways. We quotient the morphisms of $\B Q$ so that these are equal, i.e.
    \[(d \circ e) + (d' \circ e') = (d +d') \circ (e+e')\]
    \item We require that the assignment of identities is a commutative monoid homomorphism. For objects $x$ and $x'$ in $V$ we set
    \[1_{x+x'}=1_x +1_x'\]
\end{itemize}
\end{defn}
\section{Semantics Functors for Generalized Nets}\label{gen}
In this section we construct semantics categories for $\law{Q}$-nets; categories whose morphisms represent possible sequences of firings which can be performed using a given $\Q$-net. 
Let $\law{Q}$ be a Lawvere theory and let
\[\begin{tikzcd}
\Set \ar[r,bend left=45,"L"]\ar[r,phantom,"\bot",pos=.7] & \ar[l,bend left=45,"R"] \Mod(\law{Q}, \Set)
\end{tikzcd}
\] be the adjunction it induces on $\Set$. In this section we will use this adjunction to construct an adjunction
\[ 
\begin{tikzcd}
  \Net{\law{Q}} \ar[r,bend left,"F_{\Q}"] \ar[r,phantom,"\bot",pos=.6] & \ar[l,bend left,"U_{\Q}"] \Mod(\Q,\Cat)
\end{tikzcd}
\] which is analogous to the adjunction in Section \ref{CMC} and where $\Mod(\Q,\Cat)$ is the category of models of $\Q$ in $\Cat$. This adjunction factors as

\[ 
\begin{tikzcd}
\Net{\Q} \ar[r,bend left,"\A_\Q"] \ar[r,phantom,"\bot",pos=.6] & \ar[l,bend left, "{\backa}_\Q"] \Grph(\Mod(\Q)) \ar[r,bend left, "\B_\Q"]\ar[r,phantom,"\bot"] & \ar[l,bend left,"\backb_\Q"] \Mod(\Q,\Cat)
\end{tikzcd}
\]
where $\A_\Q$ freely generates a model of $\Q$ on the transitions of a given $\Q$-Net and $\B_\Q$ freely generates the structure of a category on a given $\Q$-graph.

These adjunctions are heavily motivated by the case when $\Q= \law{CMON}$ as this gives Petri nets. The main result of this paper is as follows:

\begin{thm}\label{big}
There is an adjunction
\[ 
\begin{tikzcd}
  \Net{\law{Q}} \ar[r,bend left,"F_{\Q}"] \ar[r,phantom,"\bot",pos=.6]& \ar[l,bend left,"U_{\Q}"] \Mod(\Q,\Cat).
\end{tikzcd}
\]
\end{thm}\noindent The left adjoint can be described using inference rules. 
Let $P$ be the $\law{Q}$-net
\[
\begin{tikzcd}
P=T \ar[r, shift left=1, "s"] \ar[r, shift right=1, "t",swap] & M_\law{Q} S
\end{tikzcd}
\]
The objects of $F_{\law{Q}}$ are given by $L_{\law{Q}} S$. That is, for every morphism $o \maps n \to m$ in $\law{Q}$ and every tuple of places $x_1, x_2, \ldots, x_n$ there is an object $\mathbf{o}(x_1, x_2, x_3, \ldots, x_n)$. For an equation of morphisms in $\law{Q}$
\begin{center}
\begin{tikzcd}
& n \ar[dl, "f",swap] \ar[dr,"h"] \\
m \ar[rr,"g",swap] & &k
\end{tikzcd}
\end{center}
the objects generated by each path must be equal. This means that there are $k$ equations of objects
\[ \mathbf{g_j} (\mathbf{f_1} (x_{1}, x_{2}, \ldots, x_{n}), \mathbf{f_2} (x_{1},x_{2}, \ldots, x_{n} ) , \ldots, \mathbf{f_m} (x_{1},x_{2},\ldots, x_{n}) ) = \mathbf{h_j} (x_{1}, x_2, \ldots, x_n)  \]
where the unlabeled index runs over the components of $f$ and the index $j$ runs over the components of $g$ and $h$. The morphisms of $F_Q P$ are generated inductively by the rules
\[
\infer{\tau \maps s(\tau) \to t(\tau) \in \Mor\, F_\law{Q} P}{\tau \in T} \quad \quad \infer{1_x \maps x \to x \in \Mor\, F_\law{Q} P}{x \in \Ob\, F_\law{Q} P} \quad \quad \infer{g \circ f \maps x \to z \in \Mor \, F_\law{Q} P}{f \maps x \to y \text{ and } g \maps y \to z \in \Mor\, F_\law{Q} P }
\]
\[
\infer{\mathbf{o} (f_1,f_2, \ldots, f_n) \maps \mathbf{o} (x_1,x_2, \ldots,x_n) \to \mathbf{o} (y_1,y_2, \ldots, y_n)}{o \maps n \to 1 \in \Mor\, \law{Q} \text{ and } f_1\maps x_1 \to y_1,f_2 \maps x_2 \to y_2, \ldots f_n \maps x_n \to y_n \in \Mor \, F_\law{Q} P}
\]
and is quotiented to satisfy the following:
\begin{itemize}
\item The morphisms must satisfy the same equations that the objects satisfy. That is, for an equation of morphisms in $\Q$, the objects generated by each path must again be equal. 

\item $\Mor\, F_\law{Q} P$ is quotiented to satisfy the axioms of a category including the associative and unital laws
\[ (f \circ g) \circ h = f \circ (g \circ h) \text{  and  } 1_y \circ f = f = f \circ 1_x \]
for all morphisms $f$, $g$ and $h$ in $\Mor \, F_{\law{Q}} P$.
\item $\Mor\, F_\law{Q} P$ is quotiented so that the structure maps of a category (source, target, identity and composition) are $\law{Q}$-model homomorphisms. 
\end{itemize}

For a morphism of $\Q$-nets, $(f,g) \maps P \to P'$, the $\Q$-functor
\[ F_{\Q} (f,g) \maps F_{\Q} P \to F_{\Q} P' \]
is the unique extension of $f$ and $g$ which respects composition, unitality, and the operations of $\Q$.
The proof will require several lemmas.

The first step is to construct an adjunction which freely closes the transitions of a $\Q$-net under the operations of $\Q$. In this adjunction we will write the monad $M_\law{Q}$ as $R L$ and make use of the natural isomorphism 
\[\phi \maps \hom(L X, Y) \xrightarrow{\sim} \hom(X, R Y)\]
for all sets $X$ and objects $Y$ in $\Mod(\law{Q})$.
\begin{defn}
Let\[ \A_\law{Q} \maps \Net{\law{Q}} \to \Grph(\Mod(\Q))\] be the functor which makes the assignment
\[
\begin{tikzcd}
T \ar[d,"f",swap] \ar[r, shift left=.5ex,"s"] \ar[r,shift right=.5ex,"t",swap] & RLS \ar[d,"RLg"{name=L}] & & LT \ar[d,"Lf"{name=R},swap] \ar[r, shift left=.5ex,"\phi^{-1}(s)"] \ar[r,shift right=.5ex,"\phi^{-1}(t)",swap] & LS \ar[d,"Lg"] \\
T' \ar[r, shift left=.5ex,"s'"] \ar[r,shift right=.5ex,"t'",swap] & RLS'& & LT' \ar[r, shift left=.5ex,"\phi^{-1}(s')"] \ar[r,shift right=.5ex,"\phi^{-1}(t')",swap] & LS' \ar[mapsto,from=L,to=R,shorten <=2ex,shorten >=2ex]
\end{tikzcd}
\]
on objects and morphisms.
\end{defn}

\begin{lem}
    $\A_\law{Q}$ is well-defined.
\end{lem}
The next few proofs will make heavy use of the naturality equations for $\phi$ and its inverse:
\[  \phi(a \circ b \circ Lc) = R a \circ \phi(b) \circ c\] and 
\[\phi^{-1} (Ra \circ b \circ c) = a \circ \phi^{-1}(b) \circ Lc.\]
\begin{proof}
First we show that $Le$ commutes with the source of $\A_\law{Q} P$. This follows from the chain of equalities:
\begin{align*}
 & \phi^{-1} (s') \circ Lf \\
 &= \phi^{-1} (s' \circ f) \\
 &= \phi^{-1} (RLg \circ s)\\
 &= Lg \circ \phi^{-1} (s) 
 \end{align*}
 A similar equation holds for the target maps.
\end{proof}
Let $G$ be the $\Q$-graph 
\[\begin{tikzcd}
E \ar[r,shift left=.5ex,"s"] \ar[r,shift right=.5ex,"t",swap] & V.
\end{tikzcd} \]Because $V$ is not a free model of $\Q$, there is no obvious forgetful way to turn this into a $\Q$-net.  A first guess for the $\Q$-net $\backa_\Q (G)$ might be the $\Q$-net
\[\begin{tikzcd}RE \ar[r,shift left=.5ex,"\eta_{RV} \circ s"] \ar[r,shift right=.5ex,"\eta_{RV} \circ t",swap] & M_\Q (RV)\end{tikzcd} \]
where $\eta_{RV}$ is the unit of the monad $M_\Q$ applied to the set $RV$. However, as explained in Remark \ref{failure}, this fails to be a right adjoint. An alternative approach was suggested by Mike Shulman in the comments of an nCaf\'e blog post \cite{nCafe}. This solution was inspired by the construction of the free category on a tensor scheme introduced in \emph{The Geometry of Tensor Calculus I} \cite{scheme}. Instead of using $RE$ as the set of transitions, we use
    \[ \bar{E} = \{ (e,x,y) \in RE \times M_\Q RV \times M_\Q RV \, | \, s(e) = R\epsilon_{V} (x)\text{ and } t(e) = R \epsilon_{V}(y) \}\]
where $\epsilon_{V}$ is the $V$ component of the counit for $M_\Q$. Here and in what follows we are using $s$ to denote $Rs$ and $t$ to denote $Rt$ for notational simplicity. The source and target maps of the resulting $\Q$-net are given by the projections of $\bar{E}$ onto its second and third coordinates. The set $\bar{E}$ can be described formally using pullbacks.
\begin{defn}\label{backa}
Let
\[ \backa_{\Q} \maps \Grph(\Q)\to \Net{\Q} \] 
be the functor which makes the assignment on objects and morphisms
\[
\begin{tikzcd}
E \ar[d,"f",swap] \ar[r, shift left=.5ex,"s"] \ar[r,shift right=.5ex, "t",swap] & V  \ar[d,"g"{name=L}] & \bar{E}\ar[d,"\bar{f}"{name=R},swap]\ar[mapsto,shorten <=1ex,shorten >=1ex,from=L,to=R] \ar[r, shift left=.5ex,"\bar{s}"] \ar[r, shift right=.5ex,"\bar{t}",swap] & M_\Q RV \ar[d,"M_\Q R g"]\\
E  \ar[r, shift left=.5ex,"s"] \ar[r,shift right=.5ex, "t",swap] & V   & \bar{E'}  \ar[r, shift left=.5ex,"\bar{s'}"] \ar[r, shift right=.5ex,"\bar{t'}",swap] & M_\Q RV' & 
\end{tikzcd}
\]
where
\begin{itemize}
    \item $\bar{E}$ is the pullback of sets
    \[
    \begin{tikzcd}[column sep={4em,between origins}]
 & \ar[dl,"i",swap] \bar{E} \ar[dr,"j"] & \\
    RE \ar[dr,"{(s,t)}",swap] & & M_\Q RV \times M_\Q RV \ar[dl,"R\epsilon_V \times R\epsilon_V"] \\
    & RV\times RV &
    \end{tikzcd}
    \]
   where $(s,t)$ denotes the pairing of $s$ and $t$, and $\epsilon_{RV}\times \epsilon_{RV}$ denotes the cartesian product of the counits.
    \item $\bar{s} \maps \bar{E} \to M_\Q RV$ is the composite
    \[
    \begin{tikzcd}
    \bar{E} \ar[r,"j"] & M_\Q RV \times M_\Q RV \ar[r,"\pi_1"] & M_Q RV
    \end{tikzcd}
    \]
    and $\bar{t} \maps \bar{E} \to M_\Q RV$ is the composite
      \[
    \begin{tikzcd}
    \bar{E} \ar[r,"j"] & M_\Q RV \times M_\Q RV \ar[r,"\pi_2"] & M_Q RV
    \end{tikzcd}
    \]
    that is the maps which send an element $(e,x,y)$ of $\bar{E}$ to its second and third coordinates.

   \item $\bar{f} \maps \bar{E} \to \bar{E'}$ is induced by the universal property of $\bar{E}$ as shown below
 \[
    \begin{tikzcd}
 & \ar[dl,"i",swap] \bar{E} \ar[ddddd,dashrightarrow,bend right=90,looseness=2,"\bar{f}",swap] \ar[dr,"j"] & \\
    RE \ar[ddd,"Rf",swap]\ar[dr,"{(s,t)}",swap] & & M_\Q RV \times M_\Q RV \ar[dl,"R\epsilon_V \times R\epsilon_V"] \ar[ddd,"M_\Q Rg \times M_\Q Rg"] \\
    & RV\times RV \ar[d,"Rg \times Rg"] &\\
    & RV' \times RV' & \\
    RE' \ar[ur, "{(s',t')}"] & & M_\Q RV' \times M_\Q RV' \ar[ul, "R\epsilon_{RV'} \times R\epsilon_{RV'}",swap,pos=.7] \\
     & \bar{E'} \ar[ur,"j'",swap] \ar[ul,"i'"]&
    \end{tikzcd}
    \]
    More simply, $\bar{f}$ makes the assignment
    \[ (e,x,y) \mapsto (Rf(e),M_\Q Rg (x), M_\Q Rg (y) )\]
\end{itemize}
\end{defn}

\begin{lem}
    $\backa_\law{Q}$ is well-defined.
\end{lem}

\begin{proof}
We must show that $(\bar{f}, Rg)$ is a well-defined morphism of $\Q$-nets.
$\bar{f}$ and $Rg$ commute with the source and target maps. Indeed, using the elementary descriptions we get that
\begin{align*}
    \bar{s'} \circ \bar{f} (\tau,x,y) &= \bar{s} ( Rf(\tau), M_\Q Rg(x), M_\Q Rg (y) ) \\
    &= M_\Q Rg (x) \\
    &= M_\Q Rg ( \bar{s} (\tau,x,y) )
\end{align*}
A similar equation holds for the target maps. $(\bar{f}, Rg)$ commutes with the identity maps:
\begin{align*}
    \bar{f} \circ \bar{e} (x) &= \bar{f} (Re (x), \eta_{RV} (x), \eta_{RV} (x) ) \\
    &=(Rf \circ Re(x), M_\Q Rg \circ \eta_{RV} (x), M_\Q Rg \circ \eta_{RV} (x) ) \\
    &= (Re' \circ Rg (x), M_\Q Rg \circ \eta_{RV} (x), M_\Q Rg \circ eta_{RV} (x) )\\
    &= (Re' \circ Rg (x),\eta_{RV'} \circ Rg (x),\eta_{RV'} \circ Rg (x) )\\
    &= \bar{e'} \circ Rg (x)
\end{align*}
where the last two steps follow from naturality of $\eta$ and $(f,g)$ being a morphism of $\Q$-graphs.
\end{proof}

\begin{lem}
$\backa_\Q$ is a right adjoint to $\A_\Q$.
\end{lem}

\begin{proof}
Let $P$ be the $\Q$-net
\[
\begin{tikzcd}
T \ar[r,shift left=.5ex,"s"] \ar[r,shift right=.5ex,"t",swap] & RLS
\end{tikzcd}
\]
and $Q$ be the $\Q$-graph
\[
\begin{tikzcd}
E \ar[r,shift left=.5ex,"s'"] \ar[r, shift right=.5ex,"t'",swap] & V
\end{tikzcd}
\]
We define a natural isomorphism
\[ \Phi \maps  \Hom(\B P, Q) \xrightarrow{\sim} \Hom(P, \backb_\Q Q)\]
by the rule
\[
\begin{tikzcd}
LT \ar[d,"f",swap] \ar[r, shift left=.5ex,"\phi^{-1} (s)"] \ar[r, shift right=.5ex,"\phi^{-1}(t)",swap] & LS \ar[d,"g"{name=L}]  & & T \ar[d,"h"{name=R},swap] \ar[r, shift left=.5ex,"s"] \ar[r,shift right=.5ex,"t",swap] & RLS \ar[d,"RL \phi(g)"] \ar[mapsto,from=L,to=R,shorten <=3ex, shorten >=3ex]\\
E \ar[r,shift left=.5ex,"s'"] \ar[r, shift right=.5ex,"t'",swap]  & V & & \bar{E} \ar[r,shift left=.5ex,"\bar{s'}"] \ar[r,shift right=.5ex,"\bar{t'}",swap] & RLRV
\end{tikzcd}
\]
$h$ is defined by the universal property induced by $\bar{E}$ and the diagram
\[
\begin{tikzcd}[column sep={6em,between origins}]
& \ar[ddl] \bar{E} \ar[ddrr] & & \\
&                          & & \\
RE \ar[dr,"{(Rs',Rt')}",swap] &\ar[l,"\phi(f)",swap] T \ar[uu,dashed,"h"] \ar[rr,"{(RL \phi(g) \circ s, RL \phi(g) \circ t)}"] & & RLRV \times RLRV \ar[dll,"R\epsilon_V \times R\epsilon_V"] \\
& RV \times RV & &
\end{tikzcd}
\]
This diagram is well defined because $T$ is a competitor to the pullback $\bar{E}$ i.e. it makes the lowest triangle commute. Checking this amounts to showing that the bottom square commutes and this can be verified componentwise:
\begin{align*}
R\epsilon_V \circ RL \phi(g) \circ s &=R(\epsilon_V \circ L \phi(g)) \circ s \\
&= R( \phi^{-1} (1_RV) \circ L \phi(g) ) \circ s \\
&= R( \phi^{-1} ( 1_{RV} \circ \phi(g) ) ) \circ s \\
& = R( \phi^{-1} (\phi(g) ) ) \circ s \\
&= Rg \circ s \\
& = Rg \circ \phi( \phi^{-1} (s) ) \\
&= \phi (g \circ \phi^{-1} (s) ) \\
&= \phi(s' \circ f) \\
&= Rs' \circ \phi(f)
\end{align*}
and similar equations hold for the target maps. Therefore, $h$ is well defined.  Explicitly $h$ is the map which makes the assignment on transitions in $T$
\[ \tau \mapsto ( \phi(f)(\tau), RL \phi(g) \circ s(\tau), RL \phi(g) \circ t(\tau) ). \]
 $(h, \phi(g) )$ is a well-defined morphism of $\Q$-graphs by construction. The source and target functions map elements to their second and third coordinates so the equation 
\[ \bar{s'} \circ h = RL \phi(g) \circ s\]
is true.

An inverse to $\Phi$,
\[\Phi^{-1} \maps \Hom ( P, \backb_{\Q} Q) \to \Hom( \B_{\Q} P , Q) \]
is defined as follows
\[
\begin{tikzcd}
T \ar[d,"h",swap] \ar[r, shift left=.5ex,"s"] \ar[r, shift right=.5ex,"t",swap] & RLS \ar[d,"RLg"{name=L}]  & & LT \ar[d,"\phi^{-1}(a)"{name=R},swap] \ar[r, shift left=.5ex,"\phi^{-1}(s)"] \ar[r,shift right=.5ex,"\phi^{-1}(t)",swap] & LS \ar[d," \phi^{-1}(g)"] \ar[mapsto,from=L,to=R,shorten <=2ex, shorten >=2ex]\\
\bar{E} \ar[r,shift left=.5ex,"\bar{s'}"] \ar[r, shift right=.5ex,"\bar{t'}",swap]  & RLRV & & E \ar[r,shift left=.5ex,"s'"] \ar[r,shift right=.5ex,"t'",swap] & V
\end{tikzcd}
\]$a$ is defined by the universal property of $\bar{E}$ and the diagram
\[
\begin{tikzcd}
& \ar[dl,"i",swap] \bar{E} \ar[dr,"j"] &  \\
RE \ar[dr,"{(Rs',Rt')}",swap] &\ar[l,dashed,"a",swap] T \ar[u,"h"] \ar[r,"b",dashed] &  RLRV \times RLRV \ar[dl,"R\epsilon_V \times R\epsilon_V"] \\
& RV \times RV & &
\end{tikzcd}
\]
To show that $(\phi^{-1}(a), \phi^{-1}(g))$ is a well defined morphism of $\Q$-graphs we perform the computation:
\begin{align*}
    s' \circ \phi^{-1}(a) &= \phi^{-1}(Rs' \circ a)\\
    &= \phi^{-1} (R \epsilon_V \circ \pi_1 \circ b) \\
    &= \phi^{-1} (R \epsilon_V \circ \bar{s'} \circ h)
\end{align*}
where $\pi_1 \maps RLRV \times RLRV \to RLRV$ is the projection and the last two steps follow from the definition of $\bar{s'}$ and commutativity of the above diagram. This can be reduced using the fact that $h$ commutes with the source and target of $P$ and $\backb_{\Q} Q$ and naturality of $\phi^{-1}$. Indeed,
\begin{align*}
    \phi^{-1} ( R \epsilon_V \circ \bar{s'} \circ h ) & = \phi^{-1} (R \epsilon_V \circ RLg \circ s) \\
    &=\phi^{-1} ( R(\epsilon_V \circ Lg) \circ s ) \\
    &= \epsilon_V \circ Lg \circ \phi^{-1} (s) \\
    &=\phi^{-1} (1_{RV}) \circ Lg \circ \phi^{-1} (s)\\
    &= \phi^{-1} (1_{RV} \circ g)\circ \phi^{-1} (s)\\
    &= \phi^{-1} (g) \circ \phi^{-1} (s) 
\end{align*}
A similar equation holds for target so this is a well-defined morphism of $\Q$-graphs. $\Phi$ is a natural isomorphism if it is a natural and a bijection in the places component and the transitions component. The places component is only an application of $\phi$ so it is both natural and a bijection. For the transition component let $D \maps C \to \Set$ be the diagram
\[
\begin{tikzcd}
RE \ar[dr,"{(Rs,Rt)}",swap]& & M_{\Q} RV \times M_{\Q}RV \ar[dl,"R\epsilon_V \times R\epsilon_V"]\\
& RV \times RV & \\
\end{tikzcd}
\]
where $C$ is the walking cospan. Let $\Delta_T \maps C \to \Set$ be the constant diagram which sends every object to $T$ and every morphism to $1_T$. Then the universal property of $\bar{E}$ can be expressed as the natural isomorphism
\[\Psi \maps \mathrm{Nat} (\Delta_T, D) \xrightarrow{\sim}  \Hom (T, \bar{E})\]
where $\mathrm{Nat}(\Delta_T, D)$ denotes the set of natural transformations from $\Delta_T$ to $D$. With this description, the transition component of $\Phi$ can be described as follows
\[
\Phi \maps \, f \,\mapsto \,\Psi(\langle \phi(f), (RL \phi(g) \circ s, RL \phi(g) \circ t)  \rangle)
\]
where the angle brackets encase the components of a natural transformation. Similarly, the transition component of $\Phi^{-1}$ can be described as
\[
\Phi^{-1} \maps \, h\, \mapsto \,\phi^{-1}(\Psi^{-1} (h)_{RE})
\]
where the subscript $RE$ indicates that we take the $RE$ component of the natural transformation. With this description, we can verify that they are inverses on the transition component. 
\begin{align*}
    f & \mapsto \Psi(\langle \phi(f), (RL \phi(g) \circ s, RL \phi(g) \circ t)  \rangle) \\
    & \mapsto \phi^{-1}(\Psi^{-1} (\Psi(\langle \phi(f), (RL \phi(g) \circ s, RL \phi(g) \circ t)  \rangle))_{RE} ) \\
    &= \phi^{-1}(\langle \phi(f), (RL \phi(g) \circ s, RL \phi(g) \circ t)  \rangle_{RE} )\\
    &= \phi^{-1}(\phi(f)) \\
    &= f
\end{align*}
and the other direction
\begin{align*}
    h & \mapsto \phi^{-1} ( \Psi^{-1}(h)_{RE} ) \\
      & \mapsto \Psi( \langle \phi ( \phi^{-1} (\Psi^{-1} (h)_{RE})), (RL \phi(g) \circ s, RL \phi(g) \circ t) \rangle ) \\
      &= \Psi \Psi^{-1} \langle h, (RL \phi(g) \circ s, RL \phi(g) \circ t \rangle_{RE} \\
      &= \langle h, (RL \phi(g) \circ s, RL \phi(g) \circ t \rangle_{RE}\\
      &=h
\end{align*}
The transition component of $\Phi$ and $\Phi^{-1}$ are natural because they are made up of components which are individually natural transformations.
\end{proof}

The next step in the proof of Theorem \ref{big}, is to construct an adjunction between $\Grph(\Mod(\Q))$ and $\Mod(\Q,\Cat)$. A general property of algebraic theories $\law{P}$ and $\Q$ is the property that models of $\law{P}$ in the category of models of $\law{Q}$ are the same as models of  $\Q$ in the category of models of $\law{P}$. In particular for a Lawvere theory $\Q$, a model of $\Q$ in $\Cat$ is the same as a category internal to $\Mod(\Q)$ and this extends to an equivalence of categories
\[\Mod(Q,\Cat) \cong \Cat(\Mod(\Q)).  \]
Therefore, the adjunction we seek is between the categories $\Grph(\Mod(\Q))$ and $\Cat(\Mod(\Q))$ i.e. a construction of free categories internal to $\Mod(\Q)$. The free category construction in this generality was first given in \cite{baues1997}. However, to build this adjunction we use a Theorem of Lack \cite{lack}. 

\begin{thm}\label{lack}{\normalfont [Lack]}
Let $(C, \otimes)$ be a monoidal category with
\begin{itemize}
    \item finite limits,
    \item countable colimits and,
    \item the functors $- \otimes A$ and $A \otimes -$ preserve reflexive coequalizers and colimits of countable chains.
\end{itemize}
Then $C$ admits a free monoid construction, that is, a left adjoint to the forgetful functor
\[ \mathrm{Mon} (C) \to C\]
which sends every monoid to its underlying object of $C$.
\end{thm}
By choosing an appropriate monoidal category we can use this construction to get free categories in $\Mod(\Q)$.
 A span of $\Q$-models of the form
\[ \begin{tikzcd}
 & \ar[dl,"s",swap] E \ar[dr,"t"] & \\
 V & & V
\end{tikzcd}
\]
is a graph in $\Mod(\Q)$. This description can be used to describe a monoidal category of graphs over $V$.

\begin{defn}
Let $\Span (V)$ be the monoidal category where
\begin{itemize}
    \item objects are given by spans $s,t \maps E \to V$ in $\Mod(\Q)$,
    \item morphisms are given by maps $f \maps E \to E'$ making the diagram
    \[
    \begin{tikzcd}
    & \ar[dl,"s",swap] E \ar[dd,"f"] \ar[dr,"t"] & \\
    V & & V\\
    &\ar[ul,"s'"] E'\ar[ur,"t'",swap]&
    \end{tikzcd}
    \]commute.
    \item monoidal product is given by chosen pullbacks. That is, for spans
    \[ \begin{tikzcd}
 & \ar[dl,"a",swap] E \ar[dr,"b"] & & & \ar[dl,"c",swap] F \ar[dr,"d"] & \\
 V & & V & V & & V
\end{tikzcd}
\]
their monoidal product is the chosen pullback 
\[
\begin{tikzcd}
& &\ar[dl] E \times_{V} F \ar[dr] & & \\
  &\ar[dl,"a",swap]E\ar[dr,"b"]&  &\ar[dl,"c",swap]F\ar[dr,"d"]& \\
V & &V & & V
\end{tikzcd}
\]
On morphisms $f \maps E \to E'$ and $g \maps F \to F'$ is the unique map
\[(f,g) \maps E \times_V F \to E' \times_V F'\] 
induced by the universal property of $E' \times_V F'$.
\end{itemize}
\end{defn}

A monoid in this monoidal category is a span $s,t \maps E \to V$ along with multiplication and unit maps 
\[ \circ \maps E \times_V E \to E \text{ and } e \maps V \to E\]
satisfying associativity and unitality. Interpreting $\circ$ as composition and $e$ as the map assigning identity morphisms, this gives a category internal to $\Mod(\Q)$. Indeed, a category with object model $V$ is exactly a monoid in the category $\Span (V)$ \cite{betti1996formal}. Therefore, if $\Span (V)$ satisfies the hypotheses of Theorem \ref{lack}, it gives a construction of free categories in $\Mod(\Q)$. The hypotheses of Theorem \ref{lack} require that the following conditions hold: \begin{itemize}
    \item $\Span (V)$ has finite limits and countable colimits. $\Mod(\Q)$ has these limits and colimits as shown in Theorem 3.4.5 of \cite{Borceux2}. The corresponding limits and colimits in $\Span (V)$ are computed on the apex of each span.
    \item The monoidal product of $\Span (V)$ preserves colimits of countable chains. This is true if pullbacks in $\Mod(\Q)$ preserve these colimits. 
    \item The monoidal product of $\Span(V)$ preserves reflexive coequalizers. This is true if pullbacks in $\Mod(\Q)$ preserve reflexive coequalizers.
\end{itemize}
Corollary 3.4.3 of \cite{Borceux2} states that finite limits commute with filtered colimits. In particular this means that pullbacks commute with colimits of countable chains and reflexive coequalizers. Therefore, Theorem \ref{lack} gives an adjunction
\[
\begin{tikzcd}
\Span (V) \ar[r,bend left, "\B_V"] & \Cat(V) \ar[l,bend left,"\backb_V"]
\end{tikzcd}
\]

However, we would like an adjunction between the category of all graphs and categories in $\Mod(\Q)$. To accomplish this, we use the Grothendieck construction \cite{Borceux2}.

\begin{defn}
Let 
\[\Span(-) \maps \Mod(\Q) \to \Cat\]
be the functor which sends an object $V$ to the category $\Span(V)$ of spans over $V$. For a morphism $f \maps V \to W$ in $\Mod(\Q)$, let 
\[\Span(f) \maps \Span(V) \to \Span(W) \]
be the functor which makes the assignment 
\[
\begin{tikzcd}
 & & & & &\ar[dl] E \ar[dddd,"k"] \ar[dr] &\\
 & \ar[dl] E \ar[dd,"k"] \ar[dr] & & & V \ar[d,"f",swap] & & V \ar[d,"f"] \\
 V & & V & \mapsto & W & & W \\
 & \ar[ul] E' \ar[ur] & & & V \ar[u,"f"] & & V \ar[u,"f",swap] \\
 & & & & &\ar[ul] E' \ar[ur] &\\
\end{tikzcd}
\]on objects and morphisms. Let
\[ \Cat(-) \maps \Mod(\Q) \to \Cat \]
be the functor which sends an object $V$ to the category of small categories internal to $\Mod(\Q)$ with object model of $\Q$ given by $V$. For a morphism $f \maps V \to W$, let
\[\Cat(f) \maps \Cat(V) \to \Cat(W) \]
be the functor which which makes the assignment 
\[
\begin{tikzcd}
\Mor\, C \ar[d,"k",swap] \ar[r, shift left=.5ex,"s"] \ar[r, shift right=.5ex,"t",swap] & V \ar[d,"\mathrm{id}"{name=L}] &  & \Mor\, C  \ar[d,"k"{name=R},swap]\ar[r, shift left=.5ex,"s"] \ar[r, shift right=.5ex,"t",swap] & V\ar[d,"\mathrm{id}"] \ar[r,"f"] & W \ar[d,"\mathrm{id}"] \ar[mapsto,from=L,to=R,shorten <=3ex, shorten >=3ex] \\
\Mor\, C' \ar[r, shift left=.5ex,"s'"] \ar[r,shift right=.5ex,"t'",swap] & V & & \Mor \, C'  \ar[r, shift left=.5ex,"s'"] \ar[r, shift right=.5ex,"t'",swap] & V \ar[r,"f",swap] & W
\end{tikzcd}
\]on the underlying graphs of objects and morphisms.
\end{defn}The adjunction derived from Theorem \ref{lack} can be reframed in this context.
\begin{prop}
The family of adjunctions 
\[
\begin{tikzcd}
\Span (V) \ar[r,bend left, "\B_V"] & \Cat(V) \ar[l,bend left,"{\backb}_V"]
\end{tikzcd}
\]
form components of natural transformations 
\[ C \maps \Span(-) \Rightarrow \Cat(-) \text{  and  } \backb \maps \Cat(-) \Rightarrow \Span(-) \]
Furthermore, $C$ and $\backb$ form an adjoint pair in the 2-category $[\Mod(\Q), \Cat]$ where
\begin{itemize}
    \item objects are functors $F \maps \Mod(\Q) \to \Cat$,
    \item morphisms are natural transformations $\alpha \maps F \Rightarrow G$ whose components $\alpha_c \maps F(c) \to G(c)$ are functors and,
    \item 2-morphisms are modifications $\gamma \maps \alpha \to \beta$. That is, for every object $c$ in $\Mod(\Q)$ a natural transformation of the type
    \[
    \begin{tikzcd}
    F(c) \ar[d,bend right=50,"\alpha_c"{name=L},swap] \ar[d, bend left=50,"\beta_c"{name=R}] \\
    G(c) \ar[Rightarrow, from=L, to=R,shorten <= 1.7ex, shorten >= 1.7ex,"\gamma_c"].
    \end{tikzcd}
    \]
\end{itemize}
\end{prop}

\begin{proof}
For naturality, it suffices to show that the squares 
\[
\begin{tikzcd}
\Span(V) \ar[d,"\Span(f)",swap] \ar[r,"\B_V"] & \Cat(V) \ar[d,"\Cat(f)"] & \Span(V) \ar[d,"\Span(f)",swap]  &\ar[l,"{\backb}_V",swap] \Cat(V) \ar[d,"\Cat(f)"]\\
\Span(W) \ar[r,"\B_W",swap] & \Cat(W) & \Span(W) & \ar[l,"{\backb}_V"]  \Cat(W)
\end{tikzcd}
\]commute. This is verified by direct computation. To show that $\B$ and $\backb$ are an adjoint pair we need the following fact: $\B$ is a left adjoint to $\backb$ in $[\Mod(\Q),\Cat]$ if and only if the components
\[
\begin{tikzcd}
\Span (V) \ar[r,bend left, "\B_V"] & \Cat(V) \ar[l,bend left,"{\backb}_V"]
\end{tikzcd}
\]
form an adjoint pair in $\Cat$. The counit-unit definition of adjunction requires that we have modifications $\epsilon \maps \B \circ \backb \to 1_{\Cat(-)}$ and $\eta \maps 1_{\Span(-)} \to \backb  \circ \B$ satisfying the snake equations. Unpacking this gives components $\epsilon_V \maps \B_V \circ \backb_V \Rightarrow 1_{\Cat(V)}$ and $\eta_V \maps \backb_V \circ \B_V \Rightarrow 1_{\Span(V)}$ satisfying the snake equations. This is equivalent to each component being an adjunction. However, Theorem \ref{lack} says that each component is an adjunction so the claim is shown.
\end{proof}

So far we have the diagram 
\[
\begin{tikzcd}
\Mod(\Q)\, \ar[r,bend left=70,"\Span(-)"{name=U}] \ar[r,bend right=70,"\Cat(-)"{name=D},swap] & \ar[Rightarrow,from=U, to=D,shorten <= 1.7ex, shorten >= 1.7ex,bend right,"\B"description] \ar[Rightarrow,from=D, to=U,shorten <= 1.7ex, shorten >= 1.7ex,bend right,"\backb"description]\Cat
\end{tikzcd}
\]of adjoint 1-cells in 
$[\Mod(\Q),\Cat]$. We apply the Grothendieck construction to this diagram to get 
\[
\begin{tikzcd}
\int \Span(-) \ar[r,bend left,"\int \B"] & \int \Cat(-) \ar[l,bend left,"\int \backb"]
\end{tikzcd}
\]The Grothendieck construction is a 2-functor $\int \maps [\Mod(\Q), \CAT]\to \CAT/\Mod(\Q)$ where $\CAT$ denotes the 2-category of large categories, functors, and natural transformations. When composed with the forgetful 2-functor $\CAT/\Mod(\Q) \to \CAT$ which remembers only the domain of each functor, we obtain the composite
\[\int \maps [\Mod(\Q),\CAT] \to  \CAT\]
which we denote as $\int$ in an abuse of notation.

A fundamental fact is that every 2-functor preserves adjunctions. Therefore the above diagram is an adjunction. Moreover, the following proposition shows that it is the adjunction we are looking for.

\begin{prop}\label{equiv}
The category $\int \Span(-)$ is equivalent to $\Grph(\Mod(\Q))$ and the category $\int \Cat(-)$ is equivalent to $\Mod(\Q,\Cat)$.
\end{prop}

\begin{proof}
$\int \Span(-)$ has
\begin{itemize}
    \item pairs $(V,V \leftarrow E \to V)$ as objects and,
    \item pairs $(f \maps V \to V', g \maps E \to E')$ such that the diagram  
    \[
    \begin{tikzcd}
    & \ar[dl] E \ar[ddd,"g"] \ar[dr] & \\
    V \ar[d,"f",swap]& & V\ar[d,"f"]\\
    V' & & V'\\
    & \ar[ur] E' \ar[ul] &
    \end{tikzcd}
    \]in $\Mod(\Q)$ commutes as morphisms.
    
\end{itemize}
An equivalence $\int \Span(-) \xrightarrow{\sim} \Grph(\Mod(\Q))$ sends $(V,V \leftarrow E \to V)$ to the graph $\begin{tikzcd}
E \ar[r, shift left=.5ex] \ar[r, shift right=.5ex,swap] & V
\end{tikzcd}$
and a morphism $(f,g)$ to the evident morphism of graphs $(f \maps E \to E', g \maps V \to V')$.

$\int \Cat(-)$ has 
\begin{itemize}
    \item pairs $(V,C)$ where $C$ is a category over $V$ as objects and,
    \item pairs $(f\maps V \to V', g \maps C \to C')$ where $g$ is an object fixing functor from $\Cat(f) (C)$ to $C'$ as morphisms.
\end{itemize}
An equivalence $\int \Cat(-) \xrightarrow{\sim} \Cat(\Mod(\Q)$ is given by sending objects $(V,C)$ to their second component and morphisms $(f,g)$ to the functor whose object component is $f$ and whose morphism component is the morphism component of $g$.
\end{proof}

We denote the compositions of $\int \B$ and $\int \backb$ with the above equivalences by $\B_{\Q}$ and $\backb_{\Q}$ respectively.

\noindent \textbf{Proof of Theorem \ref{big}.}
The composite adjunction $F_\law{Q} \dashv U_\law{Q}$ is constructed by setting $F_\law{Q} = \B_{\law{Q}} \circ \A_{\law{Q}}$ and $U_\law{Q} = \backb_\law{Q} \circ \backa_\law{Q}$. \hfill \qedsymbol\\
\smallskip

 \section{Applications}\label{applications}
 
 Theorem \ref{big} has many applications: it can be used to help understand existing constructions of semantics for various $\law{Q}$-nets from a categorical perspective.

 \subsection{Semantics for Pre-nets}\label{prenetapp}
In \cite{functorialsemantics} the authors construct an adjunction for pre-nets which highlights the individual token semantics. In this subsection we characterize a variation of this adjunction using Theorem \ref{big}. Theorem \ref{big} gives the following adjunction for pre-nets:
\begin{prop}
Let $R_{\law{MON}} \maps \Mod(\law{MON}) \to \Set$ be the underlying set functor and let $L_{\law{MON}} \maps \Set \to \Mod(\law{MON})$ be its left adjoint. Recall that the composite $R_{\law{MON}} \circ L_{\law{MON}}$ is denoted by $(-)^*$. Let $\SMC$ be the category of strict monoidal categories and strict monoidal functors.
 Let 
 \[U_{\law{MON}} \maps \SMC \to \PreNet\] 
 be the functor which makes the assignment on objects and morphisms
 \[
\begin{tikzcd}
C \ar[d,"F"{name=L}] & \overline{\Mor\, C} \ar[d,""{name=R}] \ar[r,shift left=.5ex] \ar[r, shift right=.5ex] & \Ob\, C^* \ar[d] \ar[mapsto,from=L,to=R,shorten <=3ex,shorten >=4ex]\\
D & \overline{\Mor\, D} \ar[r,shift left=.5ex] \ar[r, shift right=.5ex] & \Ob\, D^* 
\end{tikzcd}
 \]
where $\overline{\Mor\,C}$ and the source and target maps are as defined in Definition \ref{backa}. Then, $U_{\law{MON}}$ has a left adjoint
 
\[ F_{\law{MON}} \maps \PreNet \to \SMC \] which sends a pre-net
  \[
  \xymatrix{T \ar@<.5ex>[r]^{s} \ar@<-.5ex>[r]_{t} & S^*}
  \]to the strict monoidal category where
  \begin{itemize}
      \item the objects are given by the free monoid $L_{\law{MON}} S$ and,
      \item morphisms are defined inductively as the closure of $T$ under composition $\circ$, and monoidal product $\otimes$. This is quotiented by the axioms
      \begin{itemize}
          \item $(f \otimes g) \otimes h = f \otimes (g \otimes h)$ (the associative law)
          \item $1 \otimes f = f \otimes 1 =f$ (the left and right unit laws) 
          \item $(f_1 \circ g_1) \otimes (f_2 \circ g_2) = (f_1 \otimes f_2) \circ (g_1 \otimes g_2)$ whenever all composites are defined (the interchange law).
      \end{itemize}
  \end{itemize}
  For a morphism of pre-nets $(f,g)$, $F(f,g)$ has object component given by $L_{\law{MON}} g$ and a morphism component given by the unique composition preserving monoid homomorphism extending $f$.
 \end{prop}
 Under the individual token philosophy, monoidal categories are not yet a sufficient semantics for pre-nets. This is because in the individual token philosophy, the order of tokens going in and out of a Petri net must be accounted for. To represent this ordering, we can freely add a swapping morphism
 \[\gamma_{a,b} \maps a \otimes b \to b \otimes a \]
 for every pair of objects $a$ and $b$ in $F_{\law{MON}} (P)$. Every morphism in this new category, can now be regarded as having some permutation of the multisets in its inputs and outputs composed on either side. After choosing an initial ordering on your tokens, these permutations give an order in which tokens flow in and out of each process. Strict monoidal categories equipped with coherent swapping morphisms are called strict symmetric monoidal categories and there is a category where they are objects.
\begin{defn}
  Let $\SSMC$ be the category where 
  \begin{itemize}
      \item objects are strict symmetric monoidal categories and,
      \item morphisms are strict symmetric monoidal functors.
  \end{itemize}
 \end{defn}
 \noindent Symmetries can be freely added to the category $F_{\law{MON}} (P)$ and this process is a left adjoint.
\begin{defn}\label{sym}

 Let \[M \maps \SSMC \to \SMC\]be the forgetful functor which regards every symmetric strict monoidal category as a strict monoidal category and every strict symmetric monoidal functor as a strict monoidal functor. 
 \end{defn}
 
  \begin{prop}\label{MN}
 $M$ has a left adjoint.
 \end{prop}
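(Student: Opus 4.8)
The plan is to verify the adjunction $N \dashv M$ by exhibiting the unit explicitly and checking its universal property; equivalently, to produce a bijection
\[ \hom_{\SSMC_{Fr}}(NC, D) \;\cong\; \hom_{\qCat{\law{MON}}}(C, MD) \]
natural in $C$ and $D$. For a $\law{MON}$-category $C$, the unit $\eta_C \maps C \to MNC$ should be the evident inclusion: it is the identity on the object monoid (the object monoid of $NC$ is by construction that of $C$, and this is of the form $M_{\law{MON}}(\mathrm{id})$, so it is a legitimate morphism component) and it sends every morphism of $C$ to itself regarded as a morphism of $NC$. Since $NC$ carries the same strict monoidal product as $C$, this $\eta_C$ is a morphism in $\qCat{\law{MON}}$, and its naturality in $C$ is immediate from the construction of $N$ on morphisms.

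Given a symmetric strict monoidal category $D$ in $\SSMC_{Fr}$ and a morphism $F \maps C \to MD$ in $\qCat{\law{MON}}$, I would define the transpose $\overline{F} \maps NC \to D$ as follows. On objects, $NC$ has the same object monoid as $C$, so $\overline{F}$ is forced to be the object component of $F$ (which, being a morphism of $\qCat{\law{MON}}$, is already of the form $M_{\law{MON}} g$, hence admissible in $\SSMC_{Fr}$). On morphisms, $\overline{F}$ agrees with $F$ on all morphisms coming from $C$, and it must send each freely adjoined braiding $\gamma_{x,y} \maps x \cdot y \to y \cdot x$ of $NC$ to the symmetry $\sigma^{D}_{Fx, Fy}$ of $D$; this assignment is then extended to all of $\Mor\, NC$ using composition and the monoidal product, so that $\overline{F}$ is strict monoidal by fiat.

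The hard part will be well-definedness of $\overline{F}$. I expect the morphisms of $NC$ to admit a presentation with generators the morphisms of $C$ together with the formal braidings $\gamma_{x,y}$, and relations of three kinds: (i) the relations already holding in $C$; (ii) the naturality squares of $\gamma$ against morphisms of $C$; and (iii) the coherence axioms of a symmetric monoidal category — compatibility of $\gamma$ with $\cdot$ (the hexagon), the unit condition, and $\gamma_{y,x} \circ \gamma_{x,y} = 1$. Relations of type (i) are respected because $F$ is a functor; for (ii) and (iii) one uses that $\overline{F}$ is strict monoidal together with the fact that $D$ is a genuine symmetric strict monoidal category, so $\sigma^{D}$ satisfies exactly the equations obtained by pushing (ii) and (iii) through $\overline{F}$. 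Making this precise amounts to showing that $NC$ is the free symmetric strict monoidal category on the strict monoidal category $C$ relative to its fixed object monoid, which I would prove by a routine induction on the generation of morphisms; this bookkeeping is the only genuine work.

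Finally, $\overline{F}$ is a symmetric strict monoidal functor by construction and satisfies $M\overline{F} \circ \eta_C = F$; it is the unique such functor, since any competitor must agree with $F$ on $C$ by the triangle identity and, being symmetric monoidal, must carry $\gamma_{x,y}$ to $\sigma^{D}_{Fx,Fy}$, which pins it down completely. Naturality of $F \mapsto \overline{F}$ in both variables is a short diagram chase using naturality of $\eta$ and the fact that morphisms in $\SSMC_{Fr}$ preserve symmetries. (Alternatively one could verify the two triangle identities for $\eta$ and the counit $\varepsilon_D \maps NMD \to D$ directly, but the hom-set formulation above seems cleaner here.) This yields the desired natural bijection, hence $N \dashv M$.
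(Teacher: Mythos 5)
Your argument is correct and follows essentially the same route as the paper: the paper also establishes the natural bijection $\hom(NC,D)\cong\hom(C,MD)$ by noting that a morphism $C \to MD$ extends uniquely to a symmetric monoidal functor $NC \to D$ sending braidings to braidings, with the inverse given by restriction. You simply spell out the well-definedness of the extension (via the presentation of $\Mor\, NC$ by generators and coherence relations) more carefully than the paper, which treats that step as immediate.
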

 
 \begin{proof}
Note that because everything here is strict, the construction of the adjunction between $\SMC$ and $\SSMC$ is a simpler task than constructing an adjunction between their non-strict counterparts. To find a left adjoint to the forgetful functor from symmetric monoidal categories to monoidal categories, the tools of 2-dimensional category theory must be used. On the other hand, due to their strictness, the categories $\SMC$ and $\SSMC$ can be characterized as the category of models for a finite limits theory \cite[\S 3D]{finlim}. To construct these finite limit theories, we start with  $\mathsf{Th(Cat)}$: the well-known finite limit theory for categories. $\mathsf{Th(Cat)}$ has two sorts: $M$ for morphisms and $O$ for objects. The generating operations of $\mathsf{Th(Cat)}$ are
\[\begin{tikzcd} M \ar[r,shift left=1.1ex,"s"] \ar[r,shift right=1.1ex,"t",swap] & 0 \ar[l,"i"description] \end{tikzcd} \quad \circ \maps M \times_O M \to M \]
which represent source, target, identity, and composition. These operations are quotiented to satisfy the operations of a category. To get $\mathsf{Th(SMC)}$, the theory of strict monoidal categories, we add the operations constituting the structure of a monoid on both the objects and the morphisms. These operations have types $M \times M \to M$, $O \times O \to O$, $1 \to M$, and $1\to O$ representing the identity and multiplications of each monoid. These operations are quotiented to satisfy equations expressing the monoid axioms and compatibility with the operations already in $\mathsf{Th(Cat)}$.

$\mathsf{Th(SMC)}$ can be further upgraded to obtain $\mathsf{Th(SSMC)}$, the finite limit theory for strict symmetric monoidal categories. $\mathsf{Th(SSMC)}$ contains all the generating objects and morphisms of $\mathsf{Th(SMC)}$ in addition to a braiding operation with type $O \times O \to M$.  This braiding is required to satisfy the typical axioms of a symmetric monoidal category. Furthermore, there is an inclusion $i \maps \mathsf{Th(SMC)} \to \mathsf{Th(SSMC)}$ which sends every sort and operation of $\mathsf{Th(SMC)}$ to the sort and operation in $\mathsf{Th(SSMC)}$ which plays the same role. In summary, there is a morphism of finite limit theories
 \[ \mathsf{Th(SMC)} \xrightarrow{i} \mathsf{Th(SSMC)}.\]
 Gabriel--Ulmer duality \cite{gu1,gu2} establishes an equivalence between this sort of morphism and filtered colimit preserving right adjoints
 \[\Mod(\mathsf{Th(SSMC)},\Set) \to \Mod(\mathsf{Th(SMC)},\Set).  \]
 This right adjoint is the functor in Definition \ref{sym}. 
 \end{proof}
 
 To get the individual token semantics for pre-nets, first we freely close the transitions under composition and monoidal product using Theorem \ref{big} then we freely add symmetries as shown above.

 \begin{center}
     \begin{tikzcd}
     \PreNet \ar[r,bend left, "F_{\law{MON}}"]  & \ar[l, bend left, "U_{\law{MON}}"] \SMC \ar[r,bend left, "N"] & \SSMC\ar[l,bend left, "M"] 
     \end{tikzcd}
 \end{center}
 This composite adjunction is the adjunction $Z \dashv K$ mentioned in Example \ref{prenet}. This doesn't yet represent the standard individual token semantics for pre-nets. Let 
 \[ f \maps a\otimes a'\to b\]
 be a morphism in $Z(P)$ for a pre-net $P$. Then, composing $f$ with a permutation
 \[ \gamma_{a',a}\maps a' \otimes a\to a \otimes a'\]
 should only represent a new process if $a=a'$. This point could be argued, but the idea is that permuting different places has no functional difference because it requires the same inputs and outputs. Therefore, to get a category which is equivalent to the category of strongly concatenable processes introduced in \cite{SassoneStrong} we must quotient $Z(P)$ by requiring that
 \[ 
 \begin{tikzcd}
 x \ar[r,"\tau"] \ar[d,"a",swap] & y \\
 x \ar[r,"\tau",swap] & y \ar[u,"b",swap]
 \end{tikzcd}
 \]
 commutes for every transition $\tau \maps x \to y$ and permutations $a \maps x \to x$ and $b \maps y \to y$. The key point here is that the permutations are from an element to itself. This only occurs when the permutation switches objects which are the same.
 
 The difference between the adjunction $Z \dashv K$ and the adjunction introduced in \textsl{Functorial Models for Petri Nets} has to do with a change in definition of the morphisms in the category $\PreNet$. We require that the places component come from a function between the sets of places whereas the authors of \cite{functorialsemantics} do not. This change is made so that the category $\PreNet$ admits a smoother description of its semantics.

For practical purposes, it useful to construct semantics for Petri nets rather than pre-nets which have the individual token philosophy. For this we use the functor 
\[\Net{e} \maps \PreNet \to \Petri\] 
introduced in Section \ref{QNet} which sends every pre-net to the Petri net which forgets about the ordering on the input and output of each transtion.

In \cite{functorialsemantics} the authors suggest that an individual token semantics for a Petri net $P$ can computed by choosing a \define{linearization} of $P$; a pre-net $R$ in the preimage $\Net{c}^{-1} (P)$. Then, the semantics category $N \circ F_{\law{MON}} R$ is defined to be the individual token semantics of $P$. Note that this process depends on a choice of linearization only up to isomorphism. Let $R$ and $R'$ be linearizations of $P$, then Theorem 2.5 of \cite{functorialsemantics} proves that $Z (R) \cong Z (R')$.

Choosing a linearization and then applying $Z$ gives an individual token semantics for Petri nets but it is unclear about how to extend this to morphisms of Petri nets. For a functorial construction, we can try to reverse the functor $\Net{c}$ in the  diagram:
\begin{center}
\begin{tikzcd}
\Petri & & \\
\PreNet \arrow[u,"\Net{c}"] \ar[r,"F_{ \law{MON} }"] & \SMC \ar[r,"N"] &  \SSMC
\end{tikzcd}
\end{center}
An inverse to $\Net{c}$ cannot be single valued because there are many linearizations of a given Petri net. For given transition of a Petri net, there are many possible orderings of is source and target. To avoid making a choice, you can make them all. Let $P$ be a Petri net and let $\{(s_i, t_i \maps T \to S^* )\}_{i=1}^n$ be the set of linearizations of $P$, that is the set $\Net{c}^{-1} (P)$. Let $Q$ be the pre-net given by 
\[ 
\begin{tikzcd}
\underset{i=1}{\overset{n}{\Sigma}} T \ar[r,shift left=.5ex,"\Sigma s_i"] \ar[r, shift right=.5ex, "\Sigma {t_i}",swap] & S^*
\end{tikzcd}
\]
where $\Sigma s_i$ and $\Sigma t_i$ denote the copairing of the functions $s_i$ and $t_i$ respectively.
Then, the mapping  
\[ P \mapsto N \circ F_{\law{MON}} \left(Q \right) \]characterizes the category $\mathcal{Q} (P)$ introduced by Sassone in \textsl{On the Category of Petri Net Computations} \cite{SassoneStrong}.
Unfortunately, Sassone showed that this at first only gives a pseudofunctor \cite{SassoneStrong}. This is because there is no obvious way to turn a morphism of Petri nets into a functor between the fibers of their source and target. However Sassone showed that after performing the appropriate quotient on the target category this mapping can be turned into a functor and a left adjoint \cite{SassoneStrong}.

\subsection{Semantics for Integer Nets}
In \textsl{Executions in (Semi-)Integer Petri Nets are Compact Closed Categories}, Genovese and Herold show how compact closed symmetric monoidal categories give a categorical semantics for integer nets \cite{genovese}. Note that these categories are strictly compact closed but not strictly symmetric, i.e., the braidings are not given by the identity. To get an operational semantics for integer nets where the braidings are given by the identity we can use Theorem \ref{big}. This adjunction gives for each integer net a description of its semantics under the collective token philosophy; the morphisms represent the possible executions of an integer net but do not keep track of the identities of the individual tokens.
 \begin{prop}
Let \[\begin{tikzcd} \Set \ar[r,bend left,"L"]\ar[r,phantom,"\bot",pos=.7] & \ar[l,bend left, "R"] \Mod(\law{ABGRP})\end{tikzcd}\]
be the adjunction making up the free abelian group monad $\Z$ in Definition \ref{prenetdef}. Then there is a left adjoint
 \[F_{\law{ABGRP}} \maps \Net{\Z} \to \Mod(\law{ABGRP},\Cat)\]
 which sends a $\Z$-net 
\[
P= \xymatrix{T \ar@<+.5ex>[r]^s \ar@<-.5ex>[r]_t & RLS}
\]
to the $\law{ABGRP}$-category $F_{\law{ABGRP}} (P)$ which has 
\begin{itemize}
    \item $LS$ as its free abelian group of objects and,
    \item morphisms are given by the free closure of T under composition and identity, modulo the axioms of a category, and under monoidal product and inverses, modulo the axioms of an abelian group. We also require that the structure maps are abelian group homomorphisms.
\end{itemize}

For a morphism of $\Z$-nets $(f,g) \maps P \to P'$, $F_{\law{ABGRP}} (f,g)$ is a morphism of $\law{ABGRP}$-categories which is 
\begin{itemize}
    \item given by $Lg$ on objects and,
    \item and morphisms it is given by the unique extension of $f$ which respects the abelian group operation and composition.
\end{itemize}
 \end{prop}
If we wish to construct semantics for integer nets under the individual token philosophy we need our semantics categories to have braidings which are not given by identities. This can accomplished using a similar construction as the previous subsection. Indeed we have a diagram of categories as follows:

\begin{center}
\begin{tikzcd}
\Net{\Z} & & \\
\Net{\law{GRP}} \arrow[u,"\Net{e}"] \ar[r,"F_{ \law{GRP} }" ] & \Mod(\law{GRP},\Cat) \ar[r,"W"] &  \cat{SCCC}
\end{tikzcd}
\end{center}The features of this diagram are as follows.

\begin{itemize}

\item $\Net{e} \maps \Net{\law{GRP}} \to \Net{\Z}$ is abelianization. It sends a $\law{GRP}$-net to the integer net which forgets about the ordering on the input and output of each transition. $\Net{e}$ sends morphisms of $\Net{\law{GRP}}$-nets to themselves.

\item The functor $F_{\law{GRP}} \maps \Net{\law{GRP}} \to \Mod(\Q,\Cat)$ is constructed using Theorem \ref{big}. This functor freely closes the transitions of a $\law{GRP}$-net under the group operation, composition, and freely adds inverses and identites. These semantics categories are required to satisfy the axioms of a group and a category. The structure maps of these are categories are required to be group homomorphisms. This functor will not be explicitly described in this paper but it can be constructed using Theorem \ref{big}.

\item $\cat{SCCC}$ is the category where
\begin{itemize}
    \item An object $C$ is a strictly monoidal strictly compact closed category. This means that for every object $x$ in $C$ and $f$ in $\Mor\, C$ there are inverses with
    \[x \otimes x^{-1} = 1 \text{  and  } f \otimes f^{-1} = 1 \]
    In addition, every pair of objects is equipped with a symmetry $\gamma_{x,y} \maps x \otimes y \to y \otimes x$ satisfying the axioms of a symmetric monoidal category.
    \item Morphisms in $\cat{SCCC}$ are strict symmetric monoidal functors. Preservation of inverses follows from being a strict monoidal functor.
\end{itemize}

\item $W \maps \qCat{\law{GRP}} \to \cat{SCCC}$ is a left adjoint of an adjunction which freely adds a symmetric braiding for every pair of objects. This adjunction is described as follows. The proof of this proposition follows the same argument as Proposition \ref{MN}.
\end{itemize}

\begin{prop}
 Let $X \maps \cat{SCCC} \to \Mod(\law{GRP},\Cat)$ be the forgetful functor which sends objects and morphisms of $\cat{SCCC}$ to their underlying $\law{GRP}$-categories and $\law{GRP}$-functors. Then $X$ has a left adjoint 
 \[W \maps \Mod(\Q,\Cat) \to \cat{SCCC}\]
 which is specified by the following:
 
 \begin{itemize}
     \item for a $\law{GRP}$-category $C$, $WC$ is a symmetric monoidal category such that for every pair of objects $x$, $y$ in $C$ there is an isomorphism $\gamma_{x,y} \maps x \otimes y \to y \otimes x$ satisfiying the axioms of a symmetry in a symmetric monoidal category.
     \item for a $\law{GRP}$-functor $F \maps C \to D$, $WF$ is a the unique extension of $F$ which sends symmetries to symmetries.
 \end{itemize}
\end{prop}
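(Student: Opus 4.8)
The plan is to mirror the proof of Proposition \ref{MN}. Since the action of $W$ on objects and morphisms is already fixed by the statement, it suffices to produce, for each $\law{GRP}$-category $C$ and each object $D$ of $\cat{SCCC}_{Fr}$, a bijection
\[ \hom(WC, D) \xrightarrow{\sim} \hom(C, XD) \]
natural in both arguments. The map from left to right sends a symmetric monoidal functor $G \maps WC \to D$ to its restriction along the canonical inclusion $C \hookrightarrow WC$, which is the identity on objects and on the morphisms already present in $C$. This restriction is a $\law{GRP}$-functor, and because $WC$ has the same underlying set of objects as $C$ (hence the same generating set), its object component is still the unique extension of a function between generating sets, so it lies in $\qCat{\law{GRP}}$.

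For the inverse, given a $\law{GRP}$-functor $F \maps C \to XD$, let $WF \maps WC \to D$ agree with $F$ on the objects and morphisms of $C$ and send each freely adjoined braiding $\gamma_{x,y} \maps x \otimes y \to y \otimes x$ to the genuine symmetry $\sigma_{Fx,Fy}$ of $D$. Because $D$ is a strictly compact closed symmetric monoidal category, the images of the defining relations of $WC$ hold in $D$ automatically: naturality of $\sigma$ in $D$ forces the assignment to be natural, the hexagon and coherence axioms plus the inverse law $\gamma_{y,x} \circ \gamma_{x,y} = 1$ imposed in $WC$ are exactly the axioms satisfied by $\sigma$, and compatibility of $\sigma$ with the duals in $D$ makes $WF$ a morphism of strictly compact closed categories. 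Thus $WF$ is a well-defined symmetric monoidal functor, and it is clearly the unique such functor extending $F$ that sends braidings to braidings. The two assignments are mutually inverse — restricting $WF$ returns $F$, and conversely any $G \maps WC \to D$ is symmetric monoidal, so it must carry $\gamma_{x,y}$ to $\sigma_{Gx,Gy}$ and is therefore determined by its restriction to $C$ — and naturality in $C$ and $D$ is immediate since everything is built from restriction and free extension.

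The step I expect to be the main obstacle is the well-definedness of $W$ on objects, i.e. verifying that the free symmetric-monoidal-closure $WC$ actually exists: that adjoining an isomorphism $\gamma_{x,y}$ for every pair of objects and imposing precisely the symmetry axioms produces a genuine category without unexpected collapse, and that this is functorial in $C$. This is the usual presentation argument for free models of an essentially algebraic theory, entirely parallel to the inductive constructions used for $F_{\law{GRP}}$ and for $N$; I would either invoke that machinery directly or spell out the inductive presentation of the morphisms of $WC$ and the relations, and then observe that once $WC$ is in hand the adjunction bijection above is forced. A minor additional remark is that the freely adjoined braidings are non-identity, so $WC$ genuinely lands in $\cat{SCCC}_{Fr}$ rather than back in $\qCat{\law{GRP}}$, though this point is irrelevant to the adjointness itself.
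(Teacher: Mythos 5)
Your argument is essentially the paper's: the paper proves this proposition simply by appealing to the proof of Proposition \ref{MN}, i.e.\ the hom-set bijection $\hom(WC,D)\xrightarrow{\sim}\hom(C,XD)$ given by restriction in one direction and by the unique symmetric monoidal extension sending freely adjoined braidings to the symmetries of $D$ in the other, which is exactly what you spell out (in somewhat more detail, including the existence of the free construction $WC$). So the proposal is correct and follows the same route as the paper.
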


In order to get individual token semantics for an integer net $P$ we can start with a $\law{GRP}$-net $K$ which abelianizes to $P$. This is also called a \define{linearization} of $P$. The individual token semantics of $P$ can be defined as $W \circ F_{\law{GRP}} (K)$. To get a systematic mapping from integer nets to their individual token semantics we can combine all the linearizations of a given integer net. Let $\{(s_i, t_i \maps T_i \to M_{\law{GRP}} (S) )\}_{i=1}^n$ be the set of linearizations of $P$, that is the set $\Net{e}^{-1} (P)$. Let $N$ be the $\law{GRP}$-net given by 
\[ 
\begin{tikzcd}
\underset{i=1}{\overset{n}{\Sigma}} T_i \ar[r,shift left=.5ex,"\Sigma s_i"] \ar[r, shift right=.5ex, "\Sigma {t_i}",swap] & M_{\law{GRP}}(S)
\end{tikzcd}
\]
where $\Sigma s_i$ and $\Sigma t_i$ denote the copairing of the functions $s_i$ and $t_i$ respectively. The mapping 
\[P \mapsto W \circ F_{\law{GRP}} \left( N \right) \]characterizes the category $\mathfrak{F} (P)$ introduced in \cite{genovese}. Like before, this assignment extends to a pseudofunctor rather than a functor. Analogously to the situation for Petri nets, Genovese and Herold prove that after performing some quotients on the target category this turns into a functor and a left adjoint \cite{genovese}.

\subsection{Semantics for Elementary Net Systems}
Theorem \ref{big} can be used to construct a functorial description of the semantics of elementary net systems. This semantics matches the standard description which has not yet been made categorical. For an elementary net system $P$, $F_{\law{SLAT}} (P)$ is a category where the objects are possible markings of $P$ and the morphisms are finite sequences of firings.

 
\begin{prop}
Let 
\[ \begin{tikzcd} \Set \ar[r,phantom,"\bot",pos=.6] \ar[r,bend left,"L"] & \ar[l,bend left,"R"] \Mod(\law{SLAT})\end{tikzcd} \] be the adjunction whose associated monad is $2^{(-)} \maps \Set \to \Set$. 
Then there is a left adjoint
\[F_{\law{SLAT}} \maps \Net{\law{SLAT}} \to \Mod(\law{SLAT},\Cat)\]
which sends an elementary net system
\[ 
\begin{tikzcd}
P =T \ar[r,shift left=.5ex,"s"] \ar[r, shift right=.5ex,"t",swap] & 2^S
\end{tikzcd}
\]
to the $\law{SLAT}$-category $F_{\law{SLAT}} (P)$ with objects given by $2^{S}$ and with morphisms generated inductively by the rules:
\begin{itemize}
    \item for every transition $\tau \in T$, a morphism $\tau \maps s(\tau) \to t(\tau)$ is included,
    \item for every pair of morphisms $f \maps x \to y$ and $g \maps x' \to y'$, their sum $f+g \maps x+x' \to y+y'$ is included,
    \item for every pair of morphisms, $f \maps x \to y$ and $g \maps y \to z$, their composite $g \circ f \maps x \to z$ is included,
    \item these morphisms are quotiented to satisfy the axioms of an idempotent commutative monoid and of a category
    \item these morphisms are quotiented to make composition and the assignment of identities to be monoid homomorphisms.
\end{itemize}
For a morphism of $\law{SLAT}$-nets $(f \maps T \to T', g \maps S \to S')$, 
\[F_{\law{SLAT}} (f,g) \maps F_{\law{SLAT}} (P) \to F_{\law{SLAT}} (P')\]
is the functor given by $2^g$ on objects and by the unique monoidal and functorial extension of $f$ on morphisms.
\end{prop}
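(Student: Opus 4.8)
The plan is to obtain this adjunction as a direct instance of Theorem \ref{big} with $\law{Q} = \law{SEMILAT}$, together with the explicit description of $F_\law{Q}$ and $U_\law{Q}$ that immediately follows the statement of that theorem. Since $2^{(-)} = R_{\law{SEMILAT}} \circ L_{\law{SEMILAT}}$ is precisely the monad $M_{\law{SEMILAT}}$, and since $\Net{\law{SEMILAT}}$ is the category $\Net{\law{Q}}$ for $\law{Q}=\law{SEMILAT}$ (an elementary net system $T \rightrightarrows 2^S$ is exactly a $\law{SEMILAT}$-net), the general theorem applies verbatim. So the substance of the proof is just to unwind what the generic constructions $\A_\law{Q}$, $\B_\law{Q}$, $\C_\law{Q}$ and their right adjoints become in this case, and to check that the explicit inductive description of $F_{\law{SEMILAT}} P$ in the statement agrees with the composite $\C_\law{Q}\circ\B_\law{Q}\circ\A_\law{Q}$.

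First I would observe that $U_{\law{SEMILAT}}$ as described — sending a $\law{SEMILAT}$-category $C$ with $\Mor\,C \rightrightarrows L_{\law{SEMILAT}} X$ to $R_{\law{SEMILAT}}\Mor\,C \rightrightarrows 2^X$ — is exactly $U_\law{Q}$ from Theorem \ref{big} after identifying $R_{\law{SEMILAT}}\circ L_{\law{SEMILAT}}$ with $2^{(-)}$; the action on $\law{SEMILAT}$-functors is the same. Then $F_{\law{SEMILAT}} := F_\law{Q}$ is automatically its left adjoint. It remains to verify that $F_\law{Q} P$, computed via the general recipe following Theorem \ref{big}, has exactly the objects and morphisms listed in the proposition. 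For the objects: the general recipe gives $\Ob F_\law{Q} P = L_\law{Q} S$, which for $\law{Q}=\law{SEMILAT}$ is the free semilattice on $S$, i.e.\ the power set $2^S$ — so the object claim holds. For the morphisms: the general inference rules generate morphisms from transitions, identities, composites, and for each operation $o\maps n\to 1$ in $\law{Q}$ an operation on morphisms; for $\law{SEMILAT}$ the generating operations are $m\maps 2\to 1$ and $e\maps 0\to 1$, so the operation-on-morphisms rule collapses to exactly "for $f\maps x\to y$, $g\maps x'\to y'$ include $f+g\maps x+x'\to y+y'$" (plus a nullary unit), matching the second bullet. The quotient in the general recipe imposes the $\law{Q}$-equations on morphisms (for $\law{SEMILAT}$: associativity, unitality, commutativity, idempotence — i.e.\ the axioms of an idempotent commutative monoid), the category axioms, and the requirement that source, target, identity, composition be $\law{Q}$-homomorphisms; these are precisely the last two bullets. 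The action of $F_{\law{SEMILAT}}$ on net morphisms $(f,g)$ is $\C_\law{Q}\B_\law{Q}\A_\law{Q}(f,g)$, which by the descriptions of those functors is $L_\law{Q} g = 2^g$ on objects and the unique $\law{Q}$-homomorphic, composition-preserving extension of $f$ on morphisms — matching the final clause.

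I would also note, as the statement asks, that this semantics "matches the standard description": objects of $F_{\law{SEMILAT}} P$ are subsets of $S$, i.e.\ the markings (states) of the elementary net system in which at most one token sits in each place, and morphisms are composites of transitions closed under the parallel operation $+$ (which here is union), i.e.\ finite sequences of concurrent firings. This is a remark rather than part of the formal argument.

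The main obstacle, and the only place where real care is needed, is the bookkeeping in the last point: one must check that idempotence — the distinguishing axiom of $\law{SEMILAT}$ over $\law{CMON}$ — interacts correctly with the requirement that composition be a $\law{SEMILAT}$-homomorphism, so that the quotiented hom-sets are well-defined and the structure maps descend. Concretely, in $L_\law{Q} S = 2^S$ we have $x + x = x$ on objects, so a morphism $f\maps x\to y$ and the morphism $f + f\maps x+x \to y+y$ have the same source and target; the $\law{SEMILAT}$-homomorphism law for composition forces $f+f = f$, and one must confirm this is consistent (it is, since it is simply the statement that $\Mor\,F_\law{Q}P$ is an idempotent commutative monoid, which is what we are quotienting to achieve). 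Since Theorem \ref{big} already establishes that the general construction yields a well-defined adjunction for every Lawvere theory, this consistency is guaranteed; the work here is purely expository — translating the general formulas into the concrete language of subsets, unions, and firing sequences. Hence the proof reduces to invoking Theorem \ref{big} and performing this translation.
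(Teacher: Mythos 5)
Your proposal is correct and takes essentially the same route as the paper: the paper gives no separate proof for this proposition, treating it exactly as you do — as the instance $\law{Q}=\law{SEMILAT}$ of Theorem \ref{big}, with the explicit description of $F_{\law{SEMILAT}}$ obtained by unwinding the recipe following that theorem (objects $L_{\law{SEMILAT}}S = 2^S$, morphisms generated by transitions, the operations $m$ and $e$, and composition, then quotiented by the semilattice, category, and homomorphism axioms). Your added remark on idempotence interacting with the homomorphism law is a reasonable sanity check but, as you note, is already guaranteed by the general theorem.
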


\subsection*{Acknowledgements}
I would like to thank my advisor John Baez for his constant help and guidance. I would like to thank Christina Vasilokopoulou in particular for the slick proof of Proposition \ref{complete}. I would like to thank Clemens Berger and Mike Schulman for helping me understand how morphisms of Lawvere theories turn into morphisms of monads. I would also like to thank Mike Shulman and Todd Trimble who found a mistake in an earlier draft and helped me fix it in a way which improved the paper. I thank Joe Moeller, Daniel Cicala, Kenny Courser, and Joshua Meyers for their feedback and support. Lastly, I would like to thank my friends and family for their love and support, I would not be where I am without them.

\appendix
\section{Lawvere Theories}
\label{appendix}
Introduced by Lawvere in his landmark thesis \citep{Lawvere}, Lawvere theories are a general framework for reasoning about algebraic structures \citep{ttt,buckley}.

\begin{defn}
A Lawvere theory $\law{Q}$ is a small category with finite products such that every object is isomorphic to the iterated finite product $x^n = x \times \ldots \times x$ for a \define{generic object} $x$ and natural number $n$. Equivalently, Lawvere theories can be thought of as categories whose objects are given by natural numbers $n \in \N$ and with cartesian product given by $+$. The morphisms in a Lawvere theory are called \define{operations}.
\end{defn}
\noindent The idea is that a Lawvere theory represents the platonic embodiment of an algebraic gadget.

\begin{expl}\label{mon}
A canonical example is the Lawvere theory $\law{MON}$ of monoids. Like all Lawvere theories, the objects of $\law{MON}$ are given by natural numbers. In addition $\law{MON}$ contains the morphisms 
\[ m \maps 2 \to 1 \text{ and } e \maps 0 \to 1\]
For a monoid $M$, this represents the multiplication map
\[ \boldsymbol{\cdot} \maps M \times M \to M \]
and the map
\[ e \maps \{*\} \to M\]
which picks out the identity element of $M$. 
These maps are required to satisfy the associative law
\[
\begin{tikzcd}
3 \ar[r, "\mathrm{id} \times m"] \ar[d, "m \times \mathrm{id}",swap] & 2 \ar[d, "m"]\\
2 \ar[r, "m",swap] & 1
\end{tikzcd}
\]
and the unital laws for monoids.
\[
\begin{tikzcd}
1 \ar[r, "\mathrm{id} \times e"] \ar[dr,"\mathrm{id}",swap]& 2\ar[d, "m"] & 1 \ar[l,"e \times \mathrm{id}",swap] \ar[dl,"\mathrm{id}"] \\
 & 1 &
\end{tikzcd}
\]
$\law{MON}$ also contains all composites, tensor products, and maps necessary to make $n$ into the product $x^n$ induced by the maps $m$ and $e$.
\end{expl}
\noindent Like all good things, Lawvere theories form a category. 
\begin{defn}
 Let $\cat{Law}$ be the category where objects are Lawvere theories and morphisms are product preserving functors.
\end{defn}
Note that because morphisms of Lawvere theories preserve products, they must send the generic object of their source to the generic object of their target. Therefore to specify a morphism of Lawvere theories, it suffices to make an assignment of the morphisms which are not part of the product structure.

Let $\law{Q}$ be a Lawvere theory and $C$ a category with finite products. We can impose the axioms and operations of $\law{Q}$ onto an object in $C$ via a product preserving functor $F \maps \law{Q} \to  C$. The image $F(1)$ of the generating object $1$ gives the underlying object of $F$ and for an operation $o \maps n \to k$ in $\law{Q}$, $F(o) \maps F(x)^n \to F(x)^k$ gives a specific instance of the algebraic operation represented by $o$. There is a natural way to make a category of these functors.
\begin{defn}\label{models}
Let $\law{Q}$ be a Lawvere theory and $C$ a category with finite products. Then there is a category $\Mod(\law{Q},C)$ where 
	    \begin{itemize}
			\item objects are product preserving functors $F\maps \law{Q} \to C$ and,
			\item morphisms are natural transformations between these functors.
		\end{itemize}
When $\Mod(\law{Q})$ is written without the second argument, it is assumed to be $\Set$. We will refer to objects in $\Mod(\law{Q})$ as $\law{Q}$\define{-models} and morphisms in $\Mod(\law{Q})$ as $\law{Q}$\define{-model homomorphisms}. When $C=\Cat$, we will refer to these objects as $\Q$\define{-categories}.
\end{defn}
\noindent When the category of models is $\Set$ then there is a forgetful functor 
\[ R_{\law{Q}} \maps \Mod(\law{Q}) \to \Set \] 
which sends a product preserving functor $F \maps \law{Q} \to \Set$ to  image on the generating object $F(1)$ and a natural transformation to  component on the object $1$.
A classical result says that $R_{\law{Q}}$ \emph{always} has a left adjoint
\[ L_{\law{Q}} \maps \Set \to \Mod(\law{Q})\]
which for a set $X$, $L_{\law{Q}} X$ is referred to as the \define{free model of }$\law{Q}$\define{ on }$X$. In fact, this construction extends to fully faithful functor 
\[ \Law \to \Mnd\]
which sends a Lawvere theory $\law{Q}$ to the monad $R_{\law{Q}} \circ L_{\law{Q}} \maps \Set \to  \Set$ and where $\Mnd$ is the category of monads on $\Set$ \citep{linton}. For a Lawvere theory $\law{Q}$ we will denote the monad it induces via this functor by $M_{\law{Q}} \maps \Set \to \Set$.

For $\law{Q}= \law{MON}$, $\Mod(\law{MON}, \Set)$ is equivalent to the category $\cat{Mon}$ of monoids and monoid homomorphisms. In this case the functor $R_{\law{MON}} \maps \cat{Mon} \to \Set$ turns monoids and monoid homomorphisms into their underlying sets and functions. $R_{\law{MON}}$ has a left adjoint 
\[L_{\law{MON}} \maps \Set \to \Mon\]
which sends a set $X$ to the free monoid $L_{\law{MON}} X$. For a function $f \maps X \to Y$, $L_{\law{MON}} f$ is the unique multiplication preserving extension of $f$ to $L_\law{MON} X$.

\bibliographystyle{plainnat}

\end{document}